\numberwithin{equation}{section}
\newtheorem{theorem}{Theorem}[section]
\newtheorem{proposition}[theorem]{Proposition}
\newtheorem{lemma}[theorem]{Lemma}
\newtheorem{remark}[theorem]{Remark}
\journal{submitted}
\begin{document}

	\begin{frontmatter}
		
		\title{Self-similar solutions for a superdiffusive heat equation with gradient nonlinearity}
		
		\author[a1]{Marcelo Fernandes de Almeida}
		\address[a1]{Department of Mathematics, Federal University of Sergipe,
			Avenue Rosa Else, S\~ao Crist\'ov\~ao, Sergipe, Brazil.}
		\ead{nucaltiado@gmail.com}
		%
		\author[a2]{Arl\'ucio Viana\corref{cor2}}
		\cortext[cor2]{Corresponding author}
		\address[a2]{{Department of Mathematics, Federal University of Sergipe, Avenue Vereador Ol\'impio Grande, Itabaiana, Sergipe, Brazil.}}
		\ead{arlucioviana@ufs.br}
		%

		\begin{abstract}
		This paper is devoted to global well-posedness, self-similarity and symmetries of solutions for a superdiffusive heat equation with superlinear and gradient nonlinear terms with initial data in new homogeneous Besov-Morrey type spaces. Unlike the heat equation, we need to develop an appropriate decomposition of the two-parametric Mittag-Leffler function in order to obtain Mikhlin-type estimates get our well-posedness theorem. To the best of our knowledge, the present work is the first one concerned with a well-posedness theory for a time-fractional partial differential equations of order $\alpha\in(1,2)$ with non null initial velocity.
		\end{abstract}
		
		\begin{keyword}
			Fractional partial differential equations \sep self-similarity, well-posedness \sep radial symmetry \sep Sobolev-Morrey spaces.
			\MSC[2010] 35A01 \sep 35R11 \sep 35R09 \sep 35B06 \sep 35C06 \sep 35K05 \sep 35L05 \sep 26A33 \sep 33E12.
		\end{keyword}
		
	\end{frontmatter}


	\section{Introduction}
	Let $\Delta_x$ be the Laplace operator $\sum_{i=1}^{N}\frac{\partial^2}{\partial x_i^{2}}$, $u:\mathbb{R}^{1+N}\rightarrow\mathbb{R}$ and $\partial^{\alpha}_t$ be the Caputo's fractional derivative of order $1<\alpha<2$ (see subsection \ref{Duhamel}). In this paper, we deal with the following equation
	\begin{equation}\label{heat-wave}
		\partial^{\alpha}_{t}u = \Delta_x u +\kappa_1\vert \nabla_x u\vert^{q} + \kappa_2\vert u\vert^{\rho-1}u,\; \kappa_1\neq 0,\, \kappa_2\in\mathbb{R} ,
	\end{equation}
	subject to the initial data
	\begin{equation}\label{initial-data}
		u(0,x)=\varphi(x) \text{ and } \partial_tu(0,x)=\psi(x), 
	\end{equation}
	where $q>1$ and $\rho>1$. Note that the rescaled function $u_{\gamma}(t,x):=\gamma^{\frac{2}{\rho-1}}u(\gamma^{\frac{2}{\alpha}}t,\gamma x)$ solves  (\ref{heat-wave}) with initial data
	 \begin{align}\label{key_au}
	 \varphi_{\gamma}(x)=\gamma^{\frac{2}{\rho-1}}\varphi(\gamma x) \text{ and } \psi_{\gamma}(x)=\gamma^{\frac{2}{\rho-1}+\frac{2}{\alpha}}\psi(\gamma x) ,
	 \end{align}
	 provided that $q=\frac{2\rho}{\rho+1}$ and $u(t,x)$ solves (\ref{heat-wave})-(\ref{initial-data}). Hence, we obtain a\textit{ scaling map} of solutions,
	 \begin{align}\label{scal-map}
	 u(t,x)\mapsto u_{\gamma}(t,x),\, \text{ for all }\,\gamma>0,
	 \end{align}
	 and solutions invariant by (\ref{scal-map}) will be called \textit{self-similar solutions}, that is, 
	 \begin{align}\label{self-similar}
	 u(t,x)=u_{\gamma}(t,x).
	 \end{align}
	In the study of self-similar solutions, the natural candidates to be initial data are the homogeneous functions,
	 \begin{align}\label{hom-data}
	 \varphi(\gamma x) =\gamma^{-\frac{2}{\rho-1}}\varphi(x) \text{ and } \psi(\gamma x) =\gamma^{-\frac{2}{\rho-1}-\frac{2}{\alpha}}\psi(x).\nonumber
	 \end{align}
	 In this work we are interested in existence of self-similar solutions to  (\ref{heat-wave})-(\ref{initial-data}). For this purpose, we study (\ref{heat-wave})-(\ref{initial-data}) through its integral formulation
	 \begin{equation}\label{int}
	 u(t,x) = G_{\alpha,1}(t)\varphi(x)+G_{\alpha,2}(t)\psi(x)+\mathcal{N}_{\alpha}(u)(t,x),
	 \end{equation}
	 where 
	 \begin{equation}\label{Mit-Lef}
	 \widehat{G_{\alpha,j}(t)f}(\xi) = t^{j-1}E_{\alpha,j}(-4\pi^2t^\alpha|\xi|^2)\widehat{f}(\xi),\; j=1,2,f\in\mathcal{S}'(\mathbb{R}^N) ,
	 \end{equation}
	 and
	 \begin{equation}
	 \mathcal{N}_{\alpha}(u)=\int_{0}^{t}G_{\alpha,1}(t-s)\int_{0}^{s}r_{\alpha}(s-\tau) \left(\kappa_2\vert u\vert^{\rho-1} u+\kappa_1\vert \nabla_x u\vert^{q}\right) d\tau ds.
	 \end{equation}
	 Here and hereafter a solution $u$ will be understood as a distribution $u(t,\cdot)$ satisfying (\ref{int}), for each $t>0$. 
	 
	 The presence of the gradient requires suitable estimates in certain Sobolev-Morrey spaces $\mathcal{M}^{s}_{r,\mu}$ and  this motivate us to study the problem in functional space
	 \begin{align}\label{norm}
	 \|u \|_{X_{\beta}} = \sup_{t>0} t^{\frac{\alpha}{2}+\beta}\|u(t)\|_{\mathcal{M}_{r,\mu}^1 } +\sup_{t>0} t^{\beta}\|u(t)\|_{\mathcal{M}_{r,\mu}},
	 \end{align}
where $\beta,r$ and $\mu$ will be chosen later (see (\ref{param1})). Assuming $\psi\notequiv0$ brings new difficulties, because we need to obtain suitable estimates for two-parametric Mittag-Leffler function $E_{\alpha,2}(4\pi^2t^{\alpha}\vert\xi\vert^2)$. More precisely, we develop an appropriate decomposition for Mittag-Leffler function in order to obtain a suitable estimate (see (\ref{point1}) and (\ref{point2})) which enables us to introduce the space
	\begin{align}
	\mathcal{I}=\{(\varphi,\psi)\in\mathcal{S}'\times\mathcal{S}'; (\varphi,\psi)\in D(\alpha,\beta)\times \tilde{D}(\alpha,\beta)\},
	\end{align}
where 
	\begin{align}
	D(\alpha,\beta):=\{\varphi\in \mathcal{S}';\, G_{\alpha,1}(t)\varphi\in X_{\beta}\} \text{ and } \widetilde{D}(\alpha,\beta):=\{\psi\in\mathcal{S};\, G_{\alpha,2}(t)\psi\in X_{\beta}\}, \nonumber
	\end{align}
	for all $t>0$. Hence, applying Lemma \ref{galpha} we obtain (see Remark \ref{rem1}-(B)) that $\mathcal{M}_{p,\mu}\times\mathcal{M}_{p,\mu}^{-2/\alpha}\subseteq D(\alpha,\beta)\times\widetilde{D}(\alpha,\beta)$. It's remarkable that the investigation of self-similarity and symmetries for (\ref{heat-wave})-(\ref{initial-data}), allows to deal with  following prototype functions
\begin{align}\label{prototype}
\varphi(x)=\epsilon_1\vert x\vert ^{-\frac{2}{\rho-1}} \text{ and } \psi(x)=\epsilon_2\vert x\vert^{-\frac{2}{\rho-1}-\frac{2}{\alpha}}.
\end{align} 
which belong to  $D(\alpha,\beta)\times\widetilde{D}(\alpha,\beta)$. Also, this functions can be  arbitrarily large in the space $L^2(\mathbb{R}^N)\times \dot{H}^{\frac{2}{\alpha}}(\mathbb{R}^N)$, see Remark \ref{rem2}-(A). 

Our symmetry result, roughly speaking, says that if the initial data $\varphi$ and $\psi$ are invariant on the orthogonal group acting on $\mathbb{R}^N$ so the solution is. In particular, we show the existence of radial self-similar solutions (see Remark \ref{rem3}-(A)).

We point out that our results holds true for $\alpha=1$ and $\psi=0$ and, in this case,  the upper bound  $(\gamma_2-\gamma_1)+\frac{N-\mu}{p_1}-\frac{N-\mu}{p_2}$ in Lemma \ref{galpha} can be removed. On the other hand, for  $1<\alpha<2$ the Mikhlin theorem yields more restrictive constraints to Lemma \ref{galpha} than the usual estimates for the heat semigroup in such a way that Theorem \ref{gw} cannot come near to $\alpha=2$. 
		
Now let us to review some works. Fujita \cite{YF} remarked that the  linear counterpart of equation (\ref{heat-wave}) has similarities with wave and heat equations and presents certain qualitative properties which qualifies it as a reasonable interpolation between these equations. When $\kappa_2=0$, $\alpha=1$ and $\psi=0$, (\ref{heat-wave})-(\ref{initial-data}) turns into the viscous Hamilton-Jacobi equation. Using scaling technique, Ben-Artzi {\it et al} \cite{bsweissler} found the number $r	_c=\frac{N(q-1)}{2-q}$ and showed that it is a critical exponent for existence of solutions in $L^r$. In particular, the problem is well-posed when $r\geq r_c$ and $1<q<2$. In the Remark \ref{rem1}-(C) we provide an existence result  for this problem in Morrey spaces $\mathcal{M}_{p,\,N-\frac{N}{r_c}p}$ which are strictly larger than the Lebesgue spaces, namely,
\begin{align} \label{inv-spaces1}
L^r(\mathbb{R}^N)\subsetneq\mathcal{M}_{p,\mu} (\mathbb{R}^N)\subsetneq D(1,\beta)
\end{align}
provided that $\frac{N}{r}=\frac{N-\mu}{p}$ and $p<r$.  Our existence result is then compatible with \cite[Theorem 2.1]{bsweissler}, in view of $1< p\leq r_c\leq r$. In particular, the initial data taken in Theorem \ref{gw} is larger than in \cite{bsweissler}.
	
Recently several authors have addressed the study of global existence, self-similarity, asymptotic self-similarity  and radial symmetry of solutions for the semilinear heat equation with gradient term, see e.g.  \cite{Chipot,gild,STW,Weissler-heat,Souplet1}. In \cite{STW} it is assumed that  $\varphi$ belongs to homogeneous Besov space $ \dot{B}_{r_1,\infty}^{-\beta_1}$ and
\begin{align}
\Vert \varphi\Vert_{\dot{B}_{r_1,\infty}^{-\beta_1}}=\sup_{t>0}t^{\beta_1/2}\Vert e^{t\Delta}\varphi\Vert_{L^{r_1}(\mathbb{R}^N)}\leq \epsilon,\;\; \beta_1=\frac{2}{\rho-1}-\frac{N}{r_1} .
\end{align}
By employing the Gagliardo-Nirenberg inequality, the authors studied the existence and asymptotic behavior of global mild solutions. On the other hand, our functional approach enables us to control the gradient estimates without making use of the Gagliardo-Nirenberg inequality and allows to deal with a larger class of functions space for initial data.

Let us now review some works concerning to (\ref{heat-wave})-(\ref{initial-data}) with $\psi=0$ and $\kappa_1=0$. In \cite{HMiao} the authors established $L^p-L^q$ estimates for $\{G_{\alpha,1}(t)\}_{t\geq 0}$ and showed a blowup alternative and local well-posedness in $L^q(\mathbb{R}^N)$-framework for any $\varphi\in L^q(\mathbb{R}^N)$, where $q\geq \frac{N\alpha (\rho+1)}{2}$. Making use of Mikhlin-Hormander's type theorem on Morrey spaces, the authors of \cite{Marcelo} studied self-similarity, symmetry, antisymmetry and positivity of global solutions with small data $\varphi\in\mathcal{M}_{p,\mu}$, $\mu=N-\frac{2p}{\rho-1}$.  In \cite{MJ}, the authors established existence of self-similarity, symmetries and asymptotic behavior of solutions in Besov-Morrey spaces $\mathcal{N}^{\sigma}_{p,\mu,\infty}$ and provided a maximal class of existence in the sense that there is no known results in $X\supsetneq \mathcal{N}^{\sigma}_{p,\mu,\infty}$. Indeed, the authors provided a larger class of initial data,  because 
\begin{align}\label{inv-spaces2}
\mathcal{M}_{p,\lambda}\subsetneq
\mathcal{N}_{p,\mu ,\infty }^{\sigma }\;\;\text{ and }\;\;\dot{B}_{r,\infty
}^{k}\subset\mathcal{N}_{p,\mu ,\infty }^{\sigma },
\end{align}%
where $\frac{N-\lambda }{p}=-\sigma +\frac{N-\mu }{p}=-k+\frac{N}{r}$, $\sigma =\frac{N-\mu }{p}-\frac{2}{\rho -1}$, $k=\frac{N
}{r}-\frac{2}{\rho -1}$ and $1\leq p<r$. All
spaces in (\ref{inv-spaces1}) and (\ref{inv-spaces2}) are invariant by scaling. 

We still observe that problem (\ref{heat-wave})--(\ref{initial-data}) can be studied with a Fourier multiplier $\sigma(D)$ in place of $\Delta_x$ , where $\vert\sigma(\xi)\vert\leq C\vert\xi\vert^k$ due to estimates (\ref{point1}) and (\ref{point2}) into Propositions \ref{fund-lemma} and \ref{fund-lemma2}. Example of such operator is Riesz potential $(-\Delta _{x})^{{k }/{2}}f=\mathcal{F}^{-1}|\xi |^{k}\mathcal{F}f$, where $\mathcal{F}$ denote the Fourier transform in $\mathcal{S}'$.


	The manuscript is organized as follows. Some basic properties of the Sobolev-Morrey
	spaces and Mittag-Leffler functions are reviewed in Section \ref{pre}. We state and make some remarks on our results in Section \ref{func-settings} and their proofs are performed in Section \ref{proofs}. Sections \ref{technical} and \ref{sme} are reserved to a careful study of the several estimates which are crucial to yield our results.
	
	\bigskip
	\section{Preliminaries}\label{pre}
	In this section we review some well-known properties
	of the Morrey spaces and  Sobolev-Morrey spaces, more details can be found in
	\cite{Sawano, Kato-Morrey,Yamazaki2,Mazzucato,Miyakawa1}. Also, we obtain an integral equation which is formally equivalent to  (\ref{heat-wave})-(\ref{initial-data}) in the lines of \cite{Kilbas2}.
	
	\subsection{Sobolev-Morrey spaces}\label{sms}
	Let $Q_{r}(x_{0})$ be the open ball in $\mathbb{R}^{N}$ centered at $x_{0}$
	and with radius $r>0$. Given two parameters $1\leq p<\infty $ and $0\leq \mu
	<N$, the Morrey space $\mathcal{M}_{p,\mu }=\mathcal{M}_{p,\mu }(\mathbb{R}%
	^{N})$ is defined to be the set of the functions $f\in L^{p}(Q_{r}(x_{0}))$ such
	that
	\begin{equation}
		\Vert f\Vert _{\mathcal{M}_{p,\mu}}:=\sup_{x_{0}\in \mathbb{R}^{n},\,r>0}r^{-\frac{%
				\mu }{p}}\Vert f\Vert _{L^{p}(Q_{r}(x_{0}))}<\infty,   \label{norm-Morrey}
	\end{equation}%
	which is a Banach space endowed with the norm (\ref{norm-Morrey}). For $s\in
	\mathbb{R}$ and $1\leq p<\infty ,$ the homogeneous Sobolev-Morrey space $%
	\mathcal{M}_{p,\mu }^{s}=(-\Delta_x)^{-s/2}\mathcal{M}_{p,\mu }$ is the Banach space of all tempered distributions $f\in\mathcal{S}'(\mathbb{R}^N)/\mathcal{P}$ modulo polynomials $\mathcal{P}$ with $N$ variables.
	If $s<\frac{N-\mu}{p}$ and $p>1$, from  \cite[Theorem 1.1]{Sawano} or \cite{Mazzucato}, it holds that
	\begin{equation}
		\Vert f\Vert_{\mathcal{M}_{p,\mu}}\sim \left\Vert\left(\sum_{\nu\in\mathbb{Z}}\vert \mathcal{F}^{-1}\psi_{\nu}(\xi)\mathcal{F}f\vert^2\right)^{\frac{1}{2}}\right\Vert_{\mathcal{M}_{p,\mu}}\label{Littewood-Paley-Morrey},
	\end{equation}
	where $\sim$ denotes norm equivalence and $\{\psi_{\nu}\}_{\nu\in\mathbb{Z}}$ is a homogeneous Littlewood-Paley resolution of unity, that is, 
	\begin{align}
		\psi_{\nu}(\xi)=\phi_{\nu}(\xi)-\phi_{\nu-1}(\xi),\;\; \phi_{\nu}(\xi)=\phi_0(\xi/2^{\nu})\nonumber,
	\end{align}
	for $\phi_0\in C^{\infty}_{0}(\mathbb{R}^N)$ such that $\phi_0=1$ on the ball $Q_1(0)$ and $\text{supp}\,\phi_0\,\subset Q_{2}(0)$. In particular, using (\ref{Littewood-Paley-Morrey})  and that $\vert \xi\vert ^s\sim2^{s\nu}$ on the $\text{supp}\,\psi_{\nu}(\xi)\subset \{ \xi\in\mathbb{R}^N\,:\, 2^{\nu-1}< \vert \xi\vert< 2^{\nu+1}\}$,  we obtain
	\begin{align}
		\left\Vert\left(\sum_{\nu\in\mathbb{Z}}\vert 2^{s\nu} \mathcal{F}^{-1}\psi_{\nu}(\xi)\mathcal{F}f\vert^2\right)^{\frac{1}{2}}\right\Vert_{\mathcal{M}_{p,\mu}}&\sim \left\Vert\left(\sum_{\nu\in\mathbb{Z}}\vert  \mathcal{F}^{-1}\psi_{\nu}(\xi)\vert \xi\vert^s\mathcal{F}f\vert^2\right)^{\frac{1}{2}}\right\Vert_{\mathcal{M}_{p,\mu}}\nonumber\\
		&= \left\Vert\left(\sum_{\nu\in\mathbb{Z}}\vert  2^{\nu N}\widecheck{\psi}(2^{\nu}\cdot)\ast (\vert\xi\vert^s\widehat{f})^{\vee}\vert^2\right)^{\frac{1}{2}}\right\Vert_{\mathcal{M}_{p,\mu}}\nonumber\\
		&\sim \Vert (\vert\cdot\vert^s\widehat{f})^{\vee}\Vert_{\mathcal{M}_{p,\mu}}. \label{norm-key-SM}
	\end{align}
	Given $f\in\mathcal{M}_{p,\mu}^s$, the quantity (\ref{norm-key-SM}) define two equivalent norms on Sobolev-Morrey space, namely, 
	\begin{equation}
		\left\Vert f\right\Vert _{\mathcal{M}_{p,\mu }^{s}}=\left\Vert (\vert\cdot\vert^s\widehat{f})^{\vee}\right\Vert _{\mathcal{M}_{p,\mu}} \text{or }  \left\Vert f\right\Vert _{\mathcal{M}_{p,\mu }^{s}}=\left\Vert\left(\sum_{\nu\in\mathbb{Z}}\vert 2^{s\nu} \mathcal{F}^{-1}\psi_{\nu}(\xi)\mathcal{F}f\vert^2\right)^{\frac{1}{2}}\right\Vert_{\mathcal{M}_{p,\mu}}\label{norm-SM}.
	\end{equation}
	It follows from  Littlewood-Paley decomposition of the Lebesgue space $L^p(\mathbb{R}^N)$ and homogeneous Sobolev space $H^{s}_p(\mathbb{R}^N)$ that $\mathcal{M}_{p,0}=L^{p}(\mathbb{R}^N)$ and $\mathcal{M}_{p,0}^{s}=\dot{H}_{p}^{s}(\mathbb{R}^N)$, respectively.  Also, Morrey and Sobolev-Morrey spaces present the following scaling
	\begin{equation}
		\Vert f(\gamma \cdot )\Vert _{\mathcal{M}_{p,\mu}}=\gamma^{-\frac{N-\mu }{p}}\Vert
		f\Vert _{\mathcal{M}_{p,\mu}} \; \text{ and }\;
		\left\Vert f(\gamma\cdot )\right\Vert _{\mathcal{M}_{p,\mu }^{s}}=\gamma
		^{s-\frac{N-\mu }{p}}\left\Vert f\right\Vert _{\mathcal{M}_{p,\mu }^{s}}%
		\text{,}  \label{scal-SM}
	\end{equation}%
	where the exponents $s$ and $s-\frac{N-\mu }{p}$ are called \textit{scaling index} and \textit{regularity index}, respectively.
	\begin{lemma}
		\label{lem:2.1} Suppose that $s\in\mathbb{R}$, $1\leq
		p_1,p_2,p_3<\infty$ and $0\leq\mu_i<N$, $i=1,2,3$.
		
		\begin{description}
			\item[(i)] (Inclusion) If $\frac{N-\mu_1}{p_1}=\frac{N-\mu_2}{p_2}$ and $%
			p_1\leq p_2$,
			\begin{align}
				\mathcal{M}_{p_2,\mu_2}\subset\mathcal{M}_{p_1,\mu_1}.  \label{emb1}
			\end{align}
			
			\item[(ii)] (Sobolev-type embedding) Let $p_1\leq
			p_2$, 
			\begin{equation}
				\mathcal{M}_{p_1,\mu}^{s}\subset\mathcal{M}_{p_2,\mu}^{s - \left(\frac{N-\mu}{p_1}-\frac{N-\mu}{p_2}\right)}.  \label{sobolev-emb}
			\end{equation}
			
			\item[\textbf{(iii)}] (H\"oder inequality) Let $\;\frac{1}{p_3}=\frac{1}{p_2}+%
			\frac{1}{p_1}$ and $\frac{\mu_3}{p_3}=\frac{\mu_2}{p_2}+\frac{\mu_1}{p_1}$.
			If $f_j\in \mathcal{M}_{p_j,\mu_j}$ with $j=1,2$, then $f_1f_2\in\mathcal{M}%
			_{p_3,\mu_3}$ and
			\begin{equation}
				\Vert f_1f_2\Vert_{p_3,\mu_3}\leq \Vert f_1\Vert_{p_1,\mu_1}\Vert
				f_2\Vert_{p_2,\mu_2}.  \label{eq:holder}
			\end{equation}
		\end{description}
	\end{lemma}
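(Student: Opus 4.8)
\noindent\emph{Proof proposal.} The plan is to treat the three parts separately: (i) and (iii) should follow by H\"older's inequality on a single ball $Q_r(x_0)$, the exponent hypotheses being exactly what lets the weights $r^{-\mu_i/p_i}$ cancel, while (ii) is the substantial point and I would reduce it to a boundedness property of the Riesz potential on Morrey spaces.

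For (iii), fix $x_0\in\mathbb{R}^N$ and $r>0$ and apply the classical H\"older inequality on $Q_r(x_0)$ with exponents $p_1,p_2$ (so $\tfrac1{p_3}=\tfrac1{p_1}+\tfrac1{p_2}$) to get $\|f_1f_2\|_{L^{p_3}(Q_r(x_0))}\le\|f_1\|_{L^{p_1}(Q_r(x_0))}\|f_2\|_{L^{p_2}(Q_r(x_0))}$; multiplying by $r^{-\mu_3/p_3}$ and splitting $r^{-\mu_3/p_3}=r^{-\mu_1/p_1}r^{-\mu_2/p_2}$ via $\tfrac{\mu_3}{p_3}=\tfrac{\mu_1}{p_1}+\tfrac{\mu_2}{p_2}$, the right-hand side becomes the product of two Morrey-type averages, and the supremum over $x_0,r$ gives (\ref{eq:holder}). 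For (i), again fix $Q_r(x_0)$ and use H\"older with exponent $p_2/p_1\ge1$ against the constant $1$ to obtain $\|f\|_{L^{p_1}(Q_r(x_0))}\le c_N\,r^{N(\frac1{p_1}-\frac1{p_2})}\|f\|_{L^{p_2}(Q_r(x_0))}$; hence $r^{-\mu_1/p_1}\|f\|_{L^{p_1}(Q_r(x_0))}\le c_N\,r^{\frac{N-\mu_1}{p_1}-\frac{N-\mu_2}{p_2}}\,r^{-\mu_2/p_2}\|f\|_{L^{p_2}(Q_r(x_0))}$, and the power of $r$ vanishes exactly because $\frac{N-\mu_1}{p_1}=\frac{N-\mu_2}{p_2}$; taking suprema gives (\ref{emb1}).

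For (ii), set $\beta:=\frac{N-\mu}{p_1}-\frac{N-\mu}{p_2}\ge0$. Using the first description of the norm in (\ref{norm-SM}), if $g:=(|\cdot|^s\widehat f)^\vee\in\mathcal{M}_{p_1,\mu}$ then $(|\cdot|^{s-\beta}\widehat f)^\vee=I_\beta g$, where $I_\beta=(-\Delta_x)^{-\beta/2}$ is the Riesz potential (multiplier $|\xi|^{-\beta}$), so the embedding (\ref{sobolev-emb}) is equivalent to the bound $\|I_\beta g\|_{\mathcal{M}_{p_2,\mu}}\lesssim\|g\|_{\mathcal{M}_{p_1,\mu}}$. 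I would establish this either by invoking the Hardy--Littlewood--Sobolev inequality on Morrey spaces (which yields $I_\beta:\mathcal{M}_{p_1,\mu}\to\mathcal{M}_{p_2,\mu}$ precisely when $\tfrac1{p_2}=\tfrac1{p_1}-\tfrac{\beta}{N-\mu}$, i.e.\ for our $\beta$), or frequency blockwise: via the second description in (\ref{norm-SM}) one reduces to a Bernstein inequality $\|\mathcal{F}^{-1}\psi_\nu(\xi)\mathcal{F}g\|_{\mathcal{M}_{p_2,\mu}}\lesssim 2^{\beta\nu}\|\mathcal{F}^{-1}\psi_\nu(\xi)\mathcal{F}g\|_{\mathcal{M}_{p_1,\mu}}$ (in the spirit of the convolution computation leading to (\ref{norm-key-SM})), which upgrades $2^{(s-\beta)\nu}\|\mathcal{F}^{-1}\psi_\nu(\xi)\mathcal{F}f\|_{\mathcal{M}_{p_2,\mu}}$ into $2^{s\nu}\|\mathcal{F}^{-1}\psi_\nu(\xi)\mathcal{F}f\|_{\mathcal{M}_{p_1,\mu}}$ termwise, and one then sums in $\nu$ inside the $\ell^2$-norm. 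The hard part will be the Bernstein step together with the passage from this termwise bound to the full $\ell^2$-valued Morrey norm: this is where the vector-valued Littlewood--Paley machinery on Morrey spaces from \cite[Theorem 1.1]{Sawano} (see also \cite{Mazzucato}) is needed, and I would simply cite it rather than reprove the underlying maximal-function estimates.
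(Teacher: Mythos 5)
The paper does not actually prove Lemma \ref{lem:2.1}: it is stated as a review of known facts with pointers to \cite{Kato-Morrey,Sawano,Mazzucato,Miyakawa1}, so there is no in-paper argument to compare against, and your proposal must be judged on its own. It is a correct reconstruction of the standard proofs. Parts (i) and (iii) are exactly the one-line H\"older computations on a fixed ball $Q_r(x_0)$, with the exponent hypotheses arranged precisely so that the powers of $r$ recombine and cancel; nothing more is needed there. For part (ii), the reduction of (\ref{sobolev-emb}) to the bound $\Vert I_\beta g\Vert_{\mathcal{M}_{p_2,\mu}}\leq C\Vert g\Vert_{\mathcal{M}_{p_1,\mu}}$ for the Riesz potential $I_\beta=(-\Delta_x)^{-\beta/2}$, $\beta=\frac{N-\mu}{p_1}-\frac{N-\mu}{p_2}$, is the right move, and that bound is Adams' theorem on Morrey spaces, valid for $1<p_1<p_2$ and $0<\beta<\frac{N-\mu}{p_1}$, conditions your $\beta$ satisfies; this route is cleaner than your alternative blockwise Bernstein argument, whose passage from the termwise estimate to the $\ell^2$-valued Morrey norm genuinely requires the vector-valued maximal and Littlewood--Paley machinery of \cite{Sawano}, as you correctly flag. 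Two small caveats are worth recording: the lemma as stated formally allows $p_1=1$, where Adams' strong-type bound fails (only a weak-type estimate survives), so your argument --- like every standard one --- really covers $p_1>1$, which is consistent with the restriction $p>1$ the paper imposes when introducing the norms (\ref{norm-SM}); and when $p_1=p_2$ part (ii) is trivial since $\beta=0$, so no separate argument is needed there.
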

	
	Finally, notice that the following homogeneous functions of degree $-d$ and $s-d$, belong to Morrey and Sobolev-Morrey spaces, respectively:
	\begin{align}\label{data-stand}
		\rho_0(x)=Y_{k}(x)|x|^{-d-k}\in \mathcal{M}_{p,\mu} \text{ and } \rho_s(x)=Y_{k}(x)|x|^{s-d-k}\in \mathcal{M}_{p,\mu}^s,
	\end{align}
	where $Y_{k}(x)\in L^{p}(\mathbb{S}^{N-1})$ is a harmonic homogeneous polynomial of degree $k,$ $\mu=N-dp$, $0<d-s<N$ and $1<p<N/d$. Indeed, using Theorem 4.1 in \cite[Ch. 4]{Stein1} we obtain $\widehat{\rho}_s(\xi)=\gamma_{k,s}Y_{k}(\xi)\vert \xi\vert^{d-s-k-N}$ provided $0<d-s<N$, where $\gamma_{k,s}$ is a positive constant.  It follows  from (\ref{norm-SM}) that
	\begin{align}
		\Vert \rho_s\Vert_{\mathcal{M}_{p,\mu}^s}&=\left\Vert \left(\sum_{\nu=-\infty}^{+\infty}\vert2^{s\nu} \mathcal{F}^{-1}\psi_{\nu}(\xi)\gamma_{k,s}Y_{k}(\xi)\vert\xi\vert^{d-s-k-N}\vert^2\right)^{\frac{1}{2}}\right\Vert_{\mathcal{M}_{p,\mu}}\nonumber\\
		&\sim\left\Vert \left(\sum_{\nu=-\infty}^{+\infty}\vert \mathcal{F}^{-1}\psi_{\nu}(\xi)\vert \xi\vert^{s}\gamma_{k,s}Y_{k}(\xi)\vert\xi\vert^{d-s-k-N}\vert^2\right)^{\frac{1}{2}}\right\Vert_{\mathcal{M}_{p,\mu}}\nonumber\\
		&=\Vert \rho_0\Vert_{\mathcal{M}_{p,\mu}},
	\end{align}
	which is finite. In fact,  polar coordinates in $\mathbb{R}^N$ and homogeneity of $Y_{k}(x)\in L^{p}(\mathbb{S}^{N-1})$ yield
	\begin{align*}
		\Vert \rho_0\Vert_{L^{p}(Q_r)}^p=\int_{\mathbb{S}^{N-1}}\vert Y_k(x')\vert^p\int_0^r t^{N-dp-1}dt\,d\sigma(x')=\Vert Y_k\Vert_{L^p(\mathbb{S}^{N-1})}^p\;r^{\mu} ,
	\end{align*}
	where $\mu=N-dp$, $1<p<N/d$. 
	
	\subsection{Duhamel formula}\label{Duhamel}

	We consider the partial fractional differential equation
	\begin{align}\label{eq_gen1}
		\begin{cases}
			\partial_t^{\alpha}u(t,x)-\Delta_x u(t,x)=f(t,x),\; x\in\mathbb{R}^N,\, t>0,\\
			u(t,x)\bigr\vert_{t=0}= \varphi(x) \text{ and }\frac{\partial }{\partial t}u(t,x)\Bigr\vert_{t=0}= \psi(x),
		\end{cases}
	\end{align}
	for $\alpha\in (1,2)$ and $\partial_t^{\alpha}$ stands for partial fractional derivative given by
	\begin{align}
		\partial^{\alpha}_{t}f(t,x)=\frac{1}{\Gamma(m-\alpha)}\int_0^t\frac{\partial_s^mf(s,x)}{(t-s)^{\alpha+1-m}}ds, \;\; m-1<\alpha \leq m,\; m\in\mathbb{N}.\nonumber
	\end{align}
	Formally, applying the Fourier transform in (\ref{eq_gen1}), we obtain the fractional ordinary differential equation 
	\begin{align*}
		\begin{cases}
			\partial_t^{\alpha}\widehat{u}(t,\xi)-4\pi^2\vert\xi\vert^2\widehat{u}(t,\xi)=\widehat{f}(t,\xi),\\
			\widehat{u}(t,\xi)\vert_{t=0}= \widehat{\varphi}(\xi)\text{ and } \partial_t\widehat{u}(t,\xi)\bigr\vert_{t=0}= \widehat{\psi}(\xi)
		\end{cases}
	\end{align*}
	which is equivalent to 
	\begin{align}
		\widehat{u}(t,\xi)= &E_{\alpha,1}(-4\pi^2t^{\alpha}\vert\xi\vert^2)\widehat{\varphi} + tE_{\alpha,2}(-4\pi^2t^{\alpha}\vert\xi\vert^2)\widehat{\psi}+\nonumber\\
		&+\int_0^t E_{\alpha,1}(-4\pi^2(t-s)^{\alpha}\vert\xi\vert^2)\int_0^{s}r_{\alpha}(s-\tau)\widehat{f}(\tau,\xi)d\tau ds\nonumber,
	\end{align} 
	where $E_{\alpha,\beta}(z)$ denotes the two-parametric Mittag-Leffler function
	\begin{equation}
		E_{\alpha,\beta}(z)=\sum_{k=0}^{\infty}\frac{z^k}{\Gamma(\alpha k+\beta)}\text{ and }\; E_{\alpha}(z):=E_{\alpha,1}(z),\; \text{for all } \alpha, \beta>0.
		\label{mit1}
	\end{equation}
	Hence, in original variables, we have
	 \begin{equation}
	 u(t,x) = G_{\alpha,1}(t)\varphi(x)+G_{\alpha,2}(t)\psi(x)+\mathcal{N}_{\alpha}(u)(t,x),
	 \end{equation}
where $	\widehat{G_{\alpha,j}(t)f}(\xi)$ is given by (\ref{Mit-Lef}) and $f(u)=\kappa_2\vert u\vert^{\rho-1} u+\kappa_1\vert \nabla_x u\vert^{q}$.

Note that  $G_{2,2}(t)$ is the  wave group $\left(\frac{\sin(4\pi^2t\vert \xi\vert)}{4\pi^2t\vert \xi\vert}\right)^{\vee},$ $\,G_{2,1}(t)=\left(\cos (4\pi^2t\vert\xi\vert)\right)^{\vee}$ and $G_{1,1}(t)=(e^{-4\pi^2t\vert\xi\vert^2})^{\vee}$.

\section{Functional setting and theorems}\label{func-settings}
	
	Before starting our theorems, let $\beta>0$ and $0\leq\mu<N$ stand for
	\begin{equation}
		\beta=\frac{\alpha}{2}\left(\frac{N-\mu}{p}-\frac{N-\mu}{r}\right) \text{ and }\, \mu = N -\frac{2p}{\rho-1}\label{param1} ,
	\end{equation}
	which make $\Vert\cdot\Vert_{X_\beta}$ be invariant by \textit{scaling map} (\ref{scal-map}).

	\subsection{Well-posedness}
	Given a Banach space $Y$, we will denote $B_Y(\varepsilon)$ a closed ball of radius $\varepsilon$ centered at the origin of the space $Y$. 
	
	Our well-posedness results is stated as follows.
	\begin{theorem}[Well-posedness]\label{gw} Let $N\geq2$, $1<\alpha<2$, $q=\frac{2\rho}{\rho+1}$, and $0\leq \mu=N-\frac{2p}{\rho-1}$, for $p>1$. Suppose that $\frac{N-\mu}{p}-\frac{N-\mu}{r}<2$, $r>\rho>1+\alpha$,
		\begin{equation}
			\frac{p}{r}<\frac{1}{\alpha}-\frac{1}{2},\;\;\;\;\;\;\frac{\alpha}{2-\alpha}<q<\frac{2}{\alpha},\;\;\;\;\;\;\left(1-\frac{p}{r}\right) < \frac{\rho-1}{\alpha}\left(\frac{1}{q}-\frac{\alpha}{2}\right)\label{param-hip2} .
		\end{equation}
		
		\begin{itemize}
			\item[(i)]\textbf{(Global existence)} There exist $\varepsilon>0$ such that if $\Vert \varphi\Vert_{D(\alpha,\beta)} +\Vert \psi\Vert_{\widetilde{D}(\alpha,\beta)}\leq \varepsilon$, the problem  (\ref{heat-wave})-(\ref{initial-data}) has a unique global-in-time mild solution $u\in B_{X_{\beta}}(2\varepsilon)\subset X_{\beta}$ satisfying 
			\begin{equation}
				\Vert u(t,\cdot)\Vert_{\mathcal{M}_{r,\mu}}\leq C t^{-\beta}\; \text{ and }\; \Vert \nabla_x u(t,\cdot)\Vert_{\mathcal{M}_{r,\mu}}\leq C t^{-\beta-\alpha/2} .
			\end{equation}
			\item[(ii)](\textbf{Stability in $X_{\beta}$}) The solution $u$ of Theorem \ref{gw}(i) is stable related to the initial data $\varphi$ and $\psi$, that is, the data-map solution $(\varphi,\psi)\mapsto u$ is locally Lipschitz continuous form $D(\alpha,\beta)\times\widetilde{D}(\alpha,\beta)$ to $X_{\beta}$,
			\begin{align}
				\Vert u-\tilde{u}\Vert_{X_{\beta}}\leq C\left(\Vert \varphi-\tilde{\varphi} \Vert_{D(\alpha,\beta)}+\Vert \psi-\tilde{\psi} \Vert_{\widetilde{D}(\alpha,\beta)}\right) ,
			\end{align}
			where $u$ and $\tilde{u}$ are solutions of (\ref{heat-wave}) with initial values $( \varphi,\psi)$ and $(\tilde{\varphi},\tilde{\psi})$, respectively.
		\end{itemize}
	\end{theorem}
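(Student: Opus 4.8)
The plan is to solve the integral equation (\ref{int}) by a contraction argument in the ball $B_{X_{\beta}}(2\varepsilon)$. Write the right-hand side of (\ref{int}) as the map
\[
\Phi(u):=G_{\alpha,1}(t)\varphi+G_{\alpha,2}(t)\psi+\mathcal{N}_{\alpha}(u).
\]
The linear part is handled directly by the very definition of the data spaces, $\|G_{\alpha,1}(t)\varphi+G_{\alpha,2}(t)\psi\|_{X_{\beta}}\le\|\varphi\|_{D(\alpha,\beta)}+\|\psi\|_{\widetilde{D}(\alpha,\beta)}$, so the whole matter reduces to two estimates for the Duhamel term: a boundedness estimate
\[
\|\mathcal{N}_{\alpha}(u)\|_{X_{\beta}}\le C\bigl(\|u\|_{X_{\beta}}^{\rho}+\|u\|_{X_{\beta}}^{q}\bigr)
\]
and a Lipschitz estimate
\[
\|\mathcal{N}_{\alpha}(u)-\mathcal{N}_{\alpha}(v)\|_{X_{\beta}}\le C\bigl(\|u\|_{X_{\beta}}^{\rho-1}+\|v\|_{X_{\beta}}^{\rho-1}+\|u\|_{X_{\beta}}^{q-1}+\|v\|_{X_{\beta}}^{q-1}\bigr)\|u-v\|_{X_{\beta}}.
\]

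To obtain these I would bound the two building blocks of the $X_{\beta}$-norm (the $\mathcal{M}^{1}_{r,\mu}$ piece weighted by $t^{\alpha/2+\beta}$ and the $\mathcal{M}_{r,\mu}$ piece weighted by $t^{\beta}$) separately. Inside $\mathcal{N}_{\alpha}$ one first uses H\"older's inequality in Morrey spaces (Lemma~\ref{lem:2.1}(iii)) to control $\||u|^{\rho-1}u\|$ and $\||\nabla_x u|^{q}\|$ in a suitable $\mathcal{M}_{p_1,\mu_1}$ by powers of $\|u(s)\|_{\mathcal{M}_{r,\mu}}$ and $\|\nabla_x u(s)\|_{\mathcal{M}_{r,\mu}}$, hence by $s^{-\beta}$ and $s^{-\beta-\alpha/2}$ times powers of $\|u\|_{X_{\beta}}$; the elementary inequalities $\bigl||a|^{\rho-1}a-|b|^{\rho-1}b\bigr|\lesssim(|a|^{\rho-1}+|b|^{\rho-1})|a-b|$ and its gradient analogue yield the Lipschitz versions. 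One then applies the smoothing estimates for $G_{\alpha,1}(t-s)$ in Sobolev--Morrey spaces (Lemma~\ref{galpha}, which rests on the Mikhlin-type bounds (\ref{point1})--(\ref{point2}) for the Mittag-Leffler multipliers in Propositions~\ref{fund-lemma} and \ref{fund-lemma2}), together with the bound for the inner kernel $\int_{0}^{s}r_{\alpha}(s-\tau)\,d\tau$, and is left with a time integral of the form $\int_{0}^{t}(t-s)^{-a}s^{-b}\,ds$. The scaling identities (\ref{param1}) guarantee that the total power of $t$ produced is exactly $-(\alpha/2+\beta)$, resp.\ $-\beta$, so that the surviving constant is a Beta-type integral, which is finite precisely under $r>\rho>1+\alpha$, $\frac{p}{r}<\frac1\alpha-\frac12$, $\frac{\alpha}{2-\alpha}<q<\frac2\alpha$ and $\bigl(1-\frac pr\bigr)<\frac{\rho-1}{\alpha}\bigl(\frac1q-\frac\alpha2\bigr)$ from (\ref{param-hip2}); the remaining hypothesis $\frac{N-\mu}{p}-\frac{N-\mu}{r}<2$ is what keeps the target regularity index admissible in Lemma~\ref{galpha}.

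With the two nonlinear estimates in hand, part (i) is the standard Banach fixed point argument: choosing $\varepsilon>0$ so small that $C(2\varepsilon)^{\rho-1}+C(2\varepsilon)^{q-1}\le\frac12$ and $2\varepsilon+C(2\varepsilon)^{\rho}+C(2\varepsilon)^{q}\le2\varepsilon$, the map $\Phi$ sends $B_{X_{\beta}}(2\varepsilon)$ into itself and is a $\tfrac12$-contraction there, hence admits a unique fixed point $u\in B_{X_{\beta}}(2\varepsilon)$, which is the desired global mild solution; the decay bounds $\|u(t,\cdot)\|_{\mathcal{M}_{r,\mu}}\le Ct^{-\beta}$ and $\|\nabla_x u(t,\cdot)\|_{\mathcal{M}_{r,\mu}}\le Ct^{-\beta-\alpha/2}$ are then immediate from $u\in B_{X_{\beta}}(2\varepsilon)$ and the definition (\ref{norm}). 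For part (ii) one subtracts the integral equations for $u$ and $\tilde u$, applies the linear estimates to $\varphi-\tilde\varphi$ and $\psi-\tilde\psi$ and the Lipschitz estimate to $\mathcal{N}_{\alpha}(u)-\mathcal{N}_{\alpha}(\tilde u)$, and absorbs $C\bigl(\|u\|_{X_{\beta}}^{\rho-1}+\cdots+\|\tilde u\|_{X_{\beta}}^{q-1}\bigr)\|u-\tilde u\|_{X_{\beta}}\le\frac12\|u-\tilde u\|_{X_{\beta}}$ into the left-hand side.

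I expect the decisive obstacle to be the nonlinear estimate for the gradient term together with the two-parameter part $G_{\alpha,2}(t)\psi$: unlike the heat case, $E_{\alpha,2}$ is not a Gaussian multiplier, so one must exploit the tailored decomposition of $E_{\alpha,j}$ to extract the Mikhlin bounds on Morrey spaces, and then track the three time variables $t,s,\tau$ carefully enough that the final Beta-type integral both converges and reproduces exactly the self-similar power of $t$. This is what forces the cluster of constraints (\ref{param-hip2}), and it is also the reason the argument deteriorates as $\alpha\to2$.
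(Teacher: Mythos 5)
Your proposal is correct and follows essentially the same route as the paper: a Banach fixed point argument in $B_{X_{\beta}}(2\varepsilon)$, with the linear part absorbed into the definition of $D(\alpha,\beta)\times\widetilde{D}(\alpha,\beta)$, the nonlinear term split into the power and gradient pieces and estimated via Morrey--H\"older, the elementary difference inequality, the smoothing Lemma \ref{galpha}, and the Beta-type double time integral whose convergence and exact homogeneity in $t$ are exactly what the hypotheses (\ref{param-hip2}) and the choice (\ref{param1}) encode. The only cosmetic difference is that the paper derives the boundedness estimate from the Lipschitz one via $\mathcal{N}_{\alpha}(0)=0$ rather than proving it separately.
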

	\begin{remark}\label{rem1}Let us compare our theorem with some previous results.
		\begin{itemize}
			\item[(A)] If $\psi=0$, we may take $\varphi\in \mathcal{M}_{p,\mu}$ in Theorem \ref{gw}-(i) with smallness on $\Vert\varphi\Vert_{\mathcal{M}_{p,\mu}}$.
			\item[(B)] In Theorem \ref{gw}-(i) holds for $\alpha=1$ and $\psi=0$. Hence, the space $D(1,\beta)$ strictly includes the space $\mathcal{N}(\varphi)$ taken in \cite{STW}. Indeed, let $r<r_1$ and $\mu_2=0$ in Lemma \ref{lem:2.1}-(i) to get
			\begin{align}
				\Vert \varphi\Vert_{D(1,\beta)}&=\sup_{t>0} t^{\beta}\Vert e^{t\Delta}\varphi\Vert_{\mathcal{M}_{r,\mu}} + \sup_{t>0} t^{\frac{1}{2}+\beta}\Vert e^{t\Delta}\varphi\Vert_{\mathcal{M}_{r,\mu}^1},\nonumber\\
				&\leq \sup_{t>0} t^{\beta}\Vert e^{t\Delta}\varphi\Vert_{L^{r_1}} + \sup_{t>0} t^{\frac{1}{2}+\beta}\Vert e^{t\Delta}\varphi\Vert_{\dot{H}_{r_1}^1}=\Vert \varphi\Vert_{\mathcal{N}(\varphi)}.\nonumber
			\end{align}
			On the one hand (see \cite[(2.56)]{Mazzucato2}), homogeneous Besov-Morrey spaces can be defined by
			\begin{align}
				\mathcal{N}_{r,\mu,\infty}^{-2s}=\left\{ f\in\mathcal{S}';\, \Vert f\Vert_{\mathcal{N}_{r,\mu,\infty}^{-2s}}=\sup_{t>0}t^{-s}\Vert e^{t\Delta}f\Vert_{\mathcal{M}_{r,\mu}}<\infty\right\},\; s>0.\nonumber
			\end{align}
			Hence, the space $D(1,\beta)$ is a kind of Besov-Morrey spaces. On the other hand, when $\alpha\neq 1$ the norms $\Vert \varphi\Vert_{D(\alpha,\beta)}=\Vert G_{\alpha,1}(t)\varphi\Vert_{X_{\beta}}$ and $\Vert \psi\Vert_{\widetilde{D}(\alpha,\beta) }=\Vert G_{\alpha,2}(t)\psi\Vert_{X_{\beta}}$ satisfy
			\begin{align}
				\Vert \varphi\Vert_{D(\alpha,\beta)}\leq C  \Vert \varphi\Vert_{\mathcal{M}_{p,\mu}}\; \text{ and }\;
				\Vert \psi\Vert_{\widetilde{D}(\alpha,\beta)}\leq  C \Vert \psi\Vert_{\mathcal{M}_{p,\mu}^{-{2}/{\alpha}}}\nonumber
			\end{align}
			in view of Lemma \ref{galpha}, so $\mathcal{M}_{p,\mu}\subset D(\alpha,\beta)$ and $\mathcal{M}_{p,\mu}^{-{2}/{\alpha}}\subset \widetilde{D}(\alpha,\beta)$.
			
			\item[(C)] (Viscous Hamilton-Jacobi) Let $\kappa_2=0$, $\psi=0$ in (\ref{heat-wave})-(\ref{initial-data}), $\mu=N-\frac{q-1}{2-q}p$ and $\Vert\varphi\Vert_{D(\alpha,\beta)}$ small enough. Using the proof of Theorem \ref{gw}, the problem (\ref{heat-wave})-(\ref{initial-data}) has a unique solution $u\in C((0,\infty);\mathcal{M}_{r,\mu})\cap C((0,\infty);\mathcal{M}_{r,\mu}^1)$ such that
			\begin{equation}
				\sup_{t>0}t^{\frac{(N-\mu)}{2}\alpha\left(\frac{1}{p}-\frac{1}{r}\right)}\|u(t)\|_{\mathcal{M}_{r,\mu}} \leq C\; \mbox{ and }\; \sup_{t>0}t^{\frac{\alpha}{2}+\frac{(N-\mu)}{2}\alpha\left(\frac{1}{p}-\frac{1}{r}\right)}\|\nabla u(t)\|_{\mathcal{M}_{r,\mu}} \leq C \nonumber
			\end{equation}
			under the assumptions in Theorem \ref{gw} with the change $\rho=\frac{2}{2-q}$.
			In other words, we obtain a version of Theorem 2.1 and Proposition 2.3 of \cite{bsweissler}	when  $1<\alpha<2$. If $\alpha=1$, the assumption $\frac{N-\mu}{p}-\frac{N-\mu}{r}<2$ is not necessary  due to the smoothing effect of the heat semigroup in $\mathcal{M}_{p,\mu}$ (see e.g. \cite{Kato-Morrey}).
		\end{itemize}
	\end{remark}
	
\subsection{Self-similar solutions}
	As we commented before, a necessary condition for initial data to produce self-similar solutions is homogeneity and simplest candidates are the radial functions (\ref{prototype}). Hence, we need that $D(\alpha,\beta)$ and $\widetilde{D}(\alpha,\beta)$ satisfies
	\begin{align}
		\Vert \psi_{\gamma}\Vert_{\widetilde{D}(\alpha,\beta)} = \Vert \psi\Vert_{\widetilde{D}(\alpha,\beta)}\; \text{ and }\; \Vert \varphi_{\gamma}\Vert_{D(\alpha,\beta)} = \Vert \varphi\Vert_{D(\alpha,\beta)} ,
	\end{align}
which comes from the scaling invariance of $X_\beta$.
	
	\smallskip
\begin{theorem}[Self-similarity]\label{selfsimilarity} Under the assumptions of Theorem \ref{gw}-(i), let $\varphi$ and $\psi$ be homogeneous functions of degree $-\frac{2}{\rho-1}$ and $-\frac{2}{\rho-1}-\frac{2}{\alpha}$, respectively. Then the solution $u$ of Theorem \ref{gw}-(i) is self-similar.
\end{theorem}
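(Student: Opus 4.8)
\emph{Proof plan.} The plan is to deduce self-similarity from the uniqueness assertion in Theorem~\ref{gw}-(i), combined with the scaling covariance of the integral equation (\ref{int}) and the scaling invariance of $\|\cdot\|_{X_\beta}$. Recall from the introduction that the map $u\mapsto u_\gamma$, with $u_\gamma(t,x)=\gamma^{\frac{2}{\rho-1}}u(\gamma^{2/\alpha}t,\gamma x)$, sends a mild solution of (\ref{heat-wave})--(\ref{initial-data}) with data $(\varphi,\psi)$ to a mild solution with data $(\varphi_\gamma,\psi_\gamma)$ given by (\ref{key_au}); this uses exactly the algebraic constraint $q=\frac{2\rho}{\rho+1}$ and the choices (\ref{param1}) of $\beta,\mu$. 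Since $\varphi$ is homogeneous of degree $-\frac{2}{\rho-1}$ and $\psi$ of degree $-\frac{2}{\rho-1}-\frac{2}{\alpha}$, (\ref{key_au}) yields $\varphi_\gamma=\varphi$ and $\psi_\gamma=\psi$ for every $\gamma>0$, so $u_\gamma$ solves the \emph{same} equation (\ref{int}) as $u$; the argument then closes by showing $u_\gamma$ lies in the ball $B_{X_\beta}(2\varepsilon)$ where the solution is unique.

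First I would verify the scaling covariance of each term of (\ref{int}) on the Fourier side. Using (\ref{Mit-Lef}), the dilation rule $\widehat{f(\gamma\cdot)}(\xi)=\gamma^{-N}\widehat{f}(\xi/\gamma)$, and $(\gamma^{2/\alpha}t)^\alpha|\xi/\gamma|^2=t^\alpha|\xi|^2$, one obtains
\[
  \gamma^{\frac{2}{\rho-1}}\big[G_{\alpha,1}(\gamma^{2/\alpha}t)\varphi\big](\gamma x)=\big[G_{\alpha,1}(t)\varphi_\gamma\big](x),\qquad
  \gamma^{\frac{2}{\rho-1}}\big[G_{\alpha,2}(\gamma^{2/\alpha}t)\psi\big](\gamma x)=\big[G_{\alpha,2}(t)\psi_\gamma\big](x),
\]
where in the second identity the extra power $t^{j-1}=t$ in (\ref{Mit-Lef}) is precisely compensated by the weight $\gamma^{2/\alpha}$ carried by $\psi_\gamma$. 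For the Duhamel term one performs the substitutions $s\mapsto\gamma^{2/\alpha}s$, $\tau\mapsto\gamma^{2/\alpha}\tau$ inside $\mathcal{N}_\alpha$, uses the scaling of the kernel $r_\alpha$ under time dilation, the fact that $\nabla_x$ acting on the dilated argument produces an extra factor $\gamma$, and the identity $q=\frac{2\rho}{\rho+1}$, to get $\gamma^{\frac{2}{\rho-1}}\mathcal{N}_\alpha(u)(\gamma^{2/\alpha}t,\gamma x)=\mathcal{N}_\alpha(u_\gamma)(t,x)$. This is exactly the computation underlying the assertion made right after (\ref{scal-map}), so for the present proof it may be invoked directly.

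Combining these identities, $u_\gamma$ satisfies (\ref{int}) with data $(\varphi_\gamma,\psi_\gamma)=(\varphi,\psi)$. Moreover, the scaling relations (\ref{scal-SM}) together with the choice (\ref{param1}) of $\beta$ make $\|\cdot\|_{X_\beta}$ invariant under $u\mapsto u_\gamma$, so $\|u_\gamma\|_{X_\beta}=\|u\|_{X_\beta}\le 2\varepsilon$ and hence $u_\gamma\in B_{X_\beta}(2\varepsilon)$. By the uniqueness part of Theorem~\ref{gw}-(i) (the fixed point of the map defined by the right-hand side of (\ref{int}) is unique in that ball), we conclude $u_\gamma=u$ for every $\gamma>0$, which is precisely (\ref{self-similar}); thus $u$ is self-similar. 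The main obstacle I anticipate is the bookkeeping in the Duhamel step: one must track the time rescaling inside both nested integrals, the homogeneity of $r_\alpha$, and the gradient nonlinearity simultaneously — it is here that $q=\frac{2\rho}{\rho+1}$ is essential, and a single mismatched exponent would break the argument; the Fourier computation for the linear terms and the norm invariance are routine once (\ref{param1}) is assumed.
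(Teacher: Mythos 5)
Your proposal is correct and follows essentially the same route as the paper: verify on the Fourier side that $[G_{\alpha,j}(t)\cdot]$ commutes with the scaling map (so that $[G_{\alpha,1}(t)\varphi]_\gamma=G_{\alpha,1}(t)\varphi_\gamma$ and $[G_{\alpha,2}(t)\psi]_\gamma=G_{\alpha,2}(t)\psi_\gamma$), check $\mathcal{N}_\alpha(u_\gamma)=[\mathcal{N}_\alpha(u)]_\gamma$, use homogeneity of the data to get $\varphi_\gamma=\varphi$, $\psi_\gamma=\psi$, and conclude $u_\gamma=u$ from $\Vert u_\gamma\Vert_{X_\beta}=\Vert u\Vert_{X_\beta}$ and uniqueness in $B_{X_\beta}(2\varepsilon)$. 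No gaps; this matches the paper's argument.
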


Notice that (\ref{data-stand}) and Remark \ref{rem1}-(B) permit us to take the {\it singular functions}
\begin{align}
	\varphi(x)= \varepsilon_1 \vert x\vert ^{-\frac{2}{\rho-1}} \text{ and } \psi(x)= \varepsilon_2 \vert x\vert ^{-\frac{2}{\rho-1}-\frac{2}{\alpha}},\label{key_sing}
\end{align} 
as initial data, since $\varphi\in \mathcal{M}_{p,\mu}\subset D(\alpha,\beta)$ and $\psi\in \mathcal{M}^{-2/\alpha}_{p,\mu}\subset \widetilde{D}(\alpha,\beta)$ provided $\mu = N -\frac{2p}{\rho -1}$, $\rho>\max \{1+\frac{2}{N}, 1+\frac{2\alpha}{\alpha N-2}\}$ and $1<p<r$.
	
\begin{remark}\label{rem2}Let us remark some consequences of this theorem.
	\begin{itemize}
			\item[(A)](Infinity energy data) In Theorem \ref{selfsimilarity} we can build singular initial data $(\psi,\varphi)$ which can be arbitrarily large in  $L^2(\mathbb{R}^N)\times \dot{H}^{2/\alpha}(\mathbb{R}^N)$, provided that  $\frac{2}{\alpha}+\frac{2}{\rho-1}<\frac{N}{2}$ and $1<p<\frac{N(\rho-1)}{2}$. 
			Indeed, let  $\varphi\in\mathcal{S}'(\mathbb{R}^N)$ and $\psi\in\mathcal{S}'(\mathbb{R}^N)/\mathcal{P}$ given by (\ref{key_sing}). Using $\widehat{\varphi}(\xi)=\gamma_{0,0}\,\varepsilon_1 \vert \xi\vert ^{\frac{2}{\rho-1}-N}$, we see that $\varphi$ and $\psi$ are arbitrarily large in $\dot{H}^{2/\alpha}$ and $L^2$ in view of 	
		\begin{align}
				\Vert \psi\Vert_{L^2(\mathbb{R}^N)}^2&=\varepsilon_2^2 \int_{\mathbb{R}^N}\vert x\vert ^{-\frac{4}{\rho-1}-\frac{4}{\alpha}}dx\nonumber\\
				&=\varepsilon_2^2\lim_{\omega_2\rightarrow\infty} \int_{0}^{\omega_2}\int_{\mathbb{S}^{N-1}}r ^{-\frac{4}{\rho-1}-\frac{4}{\alpha}}r^{N-1}d\sigma dr=C\lim_{\omega_2\rightarrow\infty}\omega_2 ^{-\frac{4}{\rho-1}-\frac{4}{\alpha}+N}=+\infty\nonumber
		\end{align}
			and 
		\begin{align}
				\Vert \varphi\Vert_{\dot{H}^{\frac{2}{\alpha}}(\mathbb{R}^N)}^2&=\int_{\mathbb{R}^N}\vert\xi\vert^{4/\alpha}\vert\widehat{\varphi}(\xi)\vert^2d\xi=\gamma_{0,0}^2\varepsilon_1^2\int_{\mathbb{R}^N}\vert\xi\vert^{4/\alpha+\frac{4}{\rho-1}-2N}d\xi\nonumber\\
				&=C\lim_{\omega_1\rightarrow 0}\int_{\omega_1}^{\infty}\int_{\mathbb{S}^{N-1}}r ^{4/\alpha+\frac{4}{\rho-1}-N-1}d\sigma dr=C\lim_{\omega_1\rightarrow 0}\omega_1 ^{\frac{4}{\alpha}+\frac{4}{\rho-1}-N}=+\infty.\nonumber
		\end{align}
			Then, even the initial data $\varphi$ and $\psi$ live in Morrey spaces $\mathcal{M}_{p,\mu}$ and $\mathcal{M}_{p,\mu}^{-{2}/{\alpha}}$, respectively, they may be arbitrarily large in $\dot{H}^{2/\alpha}(\mathbb{R}^N)$ and $L^2(\mathbb{R}^N)$.
			
			\item[(B)] Inspired by \cite{Ribaud}, we use a  Littlewood-Paley decomposition of the Sobolev-Morrey spaces (see subsection \ref{sms}) to build general singular functions for Theorem \ref{selfsimilarity}. In fact,
		let $Y_{k_1}(x)$, $Y_{k_2}(x)$  be homogeneous harmonic polynomials  of degree $k_1$ and  $k_2$, respectively. Consider $S(\varphi,\psi)$ the set of functions $(\varphi,\psi)\in\mathcal{S}'(\mathbb{R}^N)\times \mathcal{S}'(\mathbb{R}^N)/\mathcal{P}$ such that 
			\begin{equation}\label{harmon-initial}
				\varphi(x)=\epsilon_1\frac{Y_{k_1}(x)}{\vert x\vert^{\frac{2}{\rho-1}+k_1} }\text{ and }  \psi(x)=\epsilon_2\frac{Y_{k_2}(x)}{\vert x\vert^{\frac{2}{\rho-1}+\frac{2}{\alpha}+k_2}}.
			\end{equation}
		By (\ref{data-stand}), the set $S(\varphi,\psi)$ gives us a class of data  for existence of self-similar solutions for (\ref{heat-wave})-(\ref{initial-data}). 
	\end{itemize}
\end{remark}
	
	\subsection{Symmetries}
	This subsection concerns with symmetries of solutions obtained in Theorems \ref{gw} and \ref{selfsimilarity}. It is straightforward to check that $E_{\alpha,1}(4\pi^2t^{\alpha}\vert\xi\vert^2)$ and $tE_{\alpha,2}(4\pi^2t^{\alpha}\vert\xi\vert^2)$ is invariant by the set $\mathcal{O}(N)$ of all rotations in $\mathbb{R}^{N}$. It follows that $G_{\alpha,1}(t)$ and $G_{\alpha,2}(t)$ are  $\mathcal{O}(N)-$ invariant.  Hence, it is natural to ask whether or not the solutions of the above theorems, present symmetry properties under certain qualitative conditions of the initial data.
	
	Let $\mathcal{A}$ be a subset of $\mathcal{O}(N).$ A function $h$ is said symmetric under action $\mathcal{A}$ when $h(x)=h(T(x))$ for all $T\in\mathcal{A}$. If $h(x)=-h(T(x))$, the function $h$ is said antisymmetric under the action of $\mathcal{A}$.
	
	\smallskip
	\begin{theorem}\label{symmetry} Let the hypotheses of Theorem \ref{gw} be satisfied . The solution $u(\cdot,t)$ is symmetric  for all $t>0$, whenever $\varphi$ and $\psi$ are symmetric  under action $\mathcal{A}$.
	\end{theorem}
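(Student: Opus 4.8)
The plan is to exploit the fact that the integral formulation (\ref{int}) is built from operators that commute with the action of $\mathcal{O}(N)$, and then invoke the uniqueness obtained in Theorem \ref{gw}(i). Concretely, fix $T\in\mathcal{A}\subset\mathcal{O}(N)$ and define the pullback $(u^{T})(t,x):=u(t,T(x))$. First I would verify that the solution map preserves the class $B_{X_{\beta}}(2\varepsilon)$ under this pullback: since $T$ is an orthogonal transformation, it is measure-preserving and maps balls $Q_{r}(x_{0})$ onto balls $Q_{r}(T(x_{0}))$, so $\|f\circ T\|_{\mathcal{M}_{r,\mu}}=\|f\|_{\mathcal{M}_{r,\mu}}$; moreover $\nabla_{x}(f\circ T)=(DT)^{\!*}(\nabla_x f)\circ T$ and $DT$ is orthogonal, hence $\|\nabla_x(f\circ T)\|_{\mathcal{M}_{r,\mu}}=\|\nabla_x f\|_{\mathcal{M}_{r,\mu}}$ and likewise for the $\mathcal{M}_{r,\mu}^{1}$ component. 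Therefore $\|u^{T}\|_{X_{\beta}}=\|u\|_{X_{\beta}}$, and $u^{T}\in B_{X_{\beta}}(2\varepsilon)$.

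Next I would check that $u^{T}$ solves the same integral equation (\ref{int}) with initial data $(\varphi\circ T,\psi\circ T)=(\varphi,\psi)$. The key observation is that $G_{\alpha,j}(t)$ is a Fourier multiplier with radial symbol $t^{j-1}E_{\alpha,j}(-4\pi^{2}t^{\alpha}|\xi|^{2})$, so it commutes with composition by any $T\in\mathcal{O}(N)$: $(G_{\alpha,j}(t)f)\circ T=G_{\alpha,j}(t)(f\circ T)$. The same is true for the Riemann–Liouville kernel $r_{\alpha}$, which acts only in time. For the nonlinearity one uses that $|\,\cdot\,|^{\rho-1}(\cdot)$ is applied pointwise, so $(\kappa_2|u|^{\rho-1}u)\circ T=\kappa_2|u^{T}|^{\rho-1}u^{T}$, and that $|\nabla_x u|$ is rotation-invariant as a scalar, so $(\kappa_1|\nabla_x u|^{q})\circ T=\kappa_1|\nabla_x u^{T}|^{q}$ by the chain-rule computation above. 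Composing (\ref{int}) with $T$ and moving $T$ inside each operator, one concludes
\[
u^{T}(t,x)=G_{\alpha,1}(t)\varphi(x)+G_{\alpha,2}(t)\psi(x)+\mathcal{N}_{\alpha}(u^{T})(t,x),
\]
i.e. $u^{T}$ is a mild solution in $B_{X_{\beta}}(2\varepsilon)$ with the same data.

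Finally, the uniqueness part of Theorem \ref{gw}(i) forces $u^{T}=u$ for every $T\in\mathcal{A}$, which is precisely the statement that $u(t,\cdot)$ is symmetric under $\mathcal{A}$ for all $t>0$. The only genuinely delicate point is the treatment of the gradient term: one must justify that $\nabla_x$ commutes with pullback by $T$ at the level of the distributions involved and that the identity $|\nabla_x(u\circ T)|=|\nabla_x u|\circ T$ holds in $\mathcal{M}_{r,\mu}$ — this is routine once one knows $u(t,\cdot)\in\mathcal{M}_{r,\mu}^{1}$, since then $\nabla_x u(t,\cdot)\in\mathcal{M}_{r,\mu}$ and the chain rule is valid. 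Everything else is a direct consequence of the radiality of the Mittag-Leffler symbols and the pointwise/temporal nature of the remaining pieces. (For the antisymmetric case, which is not asked here but handled identically, one would instead track the sign: $(-1)$-homogeneity of the data is compatible with the odd nonlinearity $|u|^{\rho-1}u$ but not in general with $|\nabla_x u|^{q}$, so the statement is correctly restricted to symmetry.)
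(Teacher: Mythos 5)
Your proposal is correct, and it reaches the conclusion by a slightly different final step than the paper. Both arguments rest on exactly the same structural facts: the symbols $t^{j-1}E_{\alpha,j}(-4\pi^2 t^\alpha|\xi|^2)$ are radial, so the operators $G_{\alpha,j}(t)$ (equivalently, their convolution kernels $k_{\alpha,j}(t,\cdot)$) commute with pullback by any $T\in\mathcal{O}(N)$; the nonlinearity $\kappa_2|u|^{\rho-1}u+\kappa_1|\nabla_x u|^q$ is invariant under such pullbacks because $|\nabla_x(u\circ T)|=|(\nabla_x u)\circ T|$ for orthogonal $T$; and the time-convolution with $r_\alpha$ plays no role in the spatial variable. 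Where you diverge is at the end: the paper propagates symmetry through the Picard iterates $u_1=G_{\alpha,1}(t)\varphi+G_{\alpha,2}(t)\psi$, $u_k=u_1+\mathcal{N}_\alpha(u_{k-1})$ by induction and then passes to the limit, whereas you observe that $u^T=u(t,T(\cdot))$ is another mild solution with the same data and the same $X_\beta$-norm, hence lies in $B_{X_\beta}(2\varepsilon)$, and invoke the uniqueness of Theorem \ref{gw}(i). Your route is arguably cleaner in that it avoids the (implicit in the paper) step of checking that a.e.\ pointwise symmetry survives convergence of the iterates in the $X_\beta$-norm; in exchange, it requires the norm-invariance computation $\|u^T\|_{X_\beta}=\|u\|_{X_\beta}$, which you carry out correctly, including the equality $\|f\circ T\|_{\mathcal{M}_{r,\mu}}=\|f\|_{\mathcal{M}_{r,\mu}}$ coming from the fact that orthogonal maps permute the balls $Q_r(x_0)$ and preserve Lebesgue measure. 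Your closing remark about why antisymmetry fails for the gradient term matches Remark \ref{rem3}(B) of the paper.
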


	\begin{remark}\label{rem3}A radially symmetric solution is a self-similar solution, if the profile $\omega$ depends only on $r=\vert x\vert$, that is, there is a function $\mathcal{U}$ such that 
		$u(t,x)= t^{-\frac{\alpha}{\rho-1}} \mathcal{U}\left({\vert x\vert}/{t^{\frac{\alpha}{2}}}\right), \;\; t>0.$
		\begin{itemize}
			\item[(A)] Let $\mathcal{A}=\mathcal{O}(N)$ in Theorem \ref{symmetry}. If $\varphi$ and $\psi$ are radial and homogeneous functions of degree $-\frac{2}{\rho-1}$ and $-\frac{2}{\rho-1}-\frac{2}{\alpha}$, respectively (see Remark \ref{rem2}), then Theorems \ref{gw}, \ref{selfsimilarity} and \ref{symmetry} imply that (\ref{heat-wave})-(\ref{initial-data}) have a unique self-similar solution $u\in X_\beta$ which is radially symmetric in $\mathbb{R}^N$.
			
			\item[(B)] Unlike the case $\kappa_1=0$, antisymmetry does not hold in general, for $\kappa_1\neq0$.\end{itemize}
	\end{remark}
	
	\section{Technical estimates}\label{technical}
	In this section we prove some estimates of Mikhlin type for Mittag-Leffler functions. In spite of the fact that these estimates are necessary in the proof of Lemma \ref{galpha}, they are of independent interest. We start the section with a suitable  decomposition of $E_{\alpha,\beta}(z)$.
	
	\subsection{Decompositions of $E_{\alpha,\beta}(z)$}
	
	\begin{proposition}\label{dec-gen-E}
		Let $z\in\mathbb{C}$ be such that $\mathcal{R}e (z) >0$ and define
		\begin{equation}\label{axi0}
			\omega_{\alpha,\beta}(z)=\frac{z^{\frac{1-\beta}{\alpha}}}{\alpha}\left[\exp\left(a_\alpha(z)+\frac{1-\beta}{\alpha}\pi i\right) + \exp\left(b_\alpha(z)-\frac{1-\beta}{\alpha}\pi i\right)\right]
		\end{equation}
		and 
		\begin{equation}\label{axi}
			l_{\alpha,\beta}(z)=\int_0^\infty \mathcal{H}_{\alpha,\beta}(s)e^{- z^{\frac{1}{\alpha}}s^{\frac{1}{\alpha}}}z^{\frac{1}{\alpha}(1-\beta)}ds,
		\end{equation}
		where 
		\begin{equation}
			\mathcal{H}_{\alpha,\beta}(s)=\frac{1}{\alpha\pi}\;\frac{\sin[(\alpha-\beta)\pi]-s\sin(\beta\pi)}{s^2+2s\cos(\alpha \pi)+1} s^{\frac{1-\beta}{\alpha}} , \label{axi2}
		\end{equation}
		\begin{equation}
		 a_\alpha(z)=z^{\frac{1}{\alpha}}e^{\frac{\pi i}{\alpha}}\ \ \text{and} \ \ b_\alpha(z)= z^{\frac{1}{\alpha}}e^{-\frac{\pi i}{\alpha}} .\nonumber
		\end{equation}
		Suppose that $1<\alpha<2$ and $1\leq\beta\leq2$, then 
		\begin{equation}\label{decomp}
			E_{\alpha,\beta}(-z)=\omega_{\alpha,\beta}(z)+ l_{\alpha,\beta}(z) .
		\end{equation}
	\end{proposition}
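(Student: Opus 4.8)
\emph{Plan.} The identity is a residue‑plus‑branch‑cut evaluation of a contour integral representation of $E_{\alpha,\beta}$. Recall (see e.g. \cite{Kilbas2}) that for $\mathcal{R}e(z)>0$ the Mittag--Leffler function admits a Hankel representation
\[E_{\alpha,\beta}(-z)=\frac{1}{2\pi i}\int_{Ha}\frac{e^{\mu}\,\mu^{\alpha-\beta}}{\mu^{\alpha}+z}\,d\mu,\]
where $Ha$ is a loop coming in from $-\infty$ along the lower edge of the slit $(-\infty,0]$, encircling the origin, and returning to $-\infty$ along the upper edge, the powers being taken on the principal branch cut along that slit, and $Ha$ drawn so that the zeros of $\mu^{\alpha}+z$ lie inside the loop. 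First I would determine those zeros: writing $\mu=\varrho e^{i\theta}$ with $|\theta|<\pi$, the equation $\mu^{\alpha}=-z$ forces $\varrho=|z|^{1/\alpha}$ and $\alpha\theta\equiv\arg(-z)\pmod{2\pi}$, and since $1<\alpha<2$ the only admissible angles are $\theta=\pm\pi/\alpha\in(-\pi,\pi)$; thus the integrand has exactly the two simple poles $\mu=a_{\alpha}(z)$ and $\mu=b_{\alpha}(z)$ (here $\alpha>1$ is what places $\pm\pi/\alpha$ strictly inside the slit plane, and $\alpha<2$ excludes further roots).

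Next I would deform $Ha$ onto the slit: shrink the loop around the origin to a circle $\gamma_{\varepsilon}$ of radius $\varepsilon$ and let $\varepsilon\to 0$, so that $Ha$ is replaced by two small positively oriented circles about $a_{\alpha}(z)$ and $b_{\alpha}(z)$ plus a contour hugging the two edges of $(-\infty,0]$. The arcs needed to justify this deformation run in the angular sector $|\arg\mu|\geq\pi/\alpha>\pi/2$ (the inequality uses $\alpha<2$), where $\mathcal{R}e(\mu)\to-\infty$ and $e^{\mu}$ decays super‑exponentially, so they contribute nothing; the circle $\gamma_{\varepsilon}$ contributes $O(\varepsilon^{\alpha-\beta+1})$, which vanishes because $\beta\leq 2<\alpha+1$.

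It then remains to evaluate the two pieces. At a simple zero $\mu_{0}$ of $\mu^{\alpha}+z$ the residue of the integrand equals $e^{\mu_{0}}\mu_{0}^{\alpha-\beta}/(\alpha\mu_{0}^{\alpha-1})=e^{\mu_{0}}\mu_{0}^{1-\beta}/\alpha$; summing over $\mu_{0}\in\{a_{\alpha}(z),b_{\alpha}(z)\}$ and using $a_{\alpha}(z)^{1-\beta}=z^{(1-\beta)/\alpha}e^{(1-\beta)\pi i/\alpha}$, $b_{\alpha}(z)^{1-\beta}=z^{(1-\beta)/\alpha}e^{-(1-\beta)\pi i/\alpha}$ produces precisely $\omega_{\alpha,\beta}(z)$. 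For the slit part I would parametrize the two edges by $\mu=re^{\pm i\pi}$, $r>0$, so that $e^{\mu}=e^{-r}$ and the two edge integrals combine over the common denominator $r^{2\alpha}+2r^{\alpha}z\cos(\alpha\pi)+z^{2}$ with numerator $2i\,[\,r^{\alpha}\sin(\beta\pi)-z\sin((\alpha-\beta)\pi)\,]$; the substitution $r=(zs)^{1/\alpha}$ then turns this real integral, termwise, into $\int_{0}^{\infty}\mathcal{H}_{\alpha,\beta}(s)\,e^{-z^{1/\alpha}s^{1/\alpha}}z^{(1-\beta)/\alpha}\,ds=l_{\alpha,\beta}(z)$, on comparison with the definition of $\mathcal{H}_{\alpha,\beta}$. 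Adding the pieces yields $E_{\alpha,\beta}(-z)=\omega_{\alpha,\beta}(z)+l_{\alpha,\beta}(z)$.

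The main obstacle is not a single computation but the contour bookkeeping: confirming the Hankel representation is valid with the poles enclosed, showing every auxiliary arc (the large ones and $\gamma_{\varepsilon}$) drops out in the limit — this is where $1<\alpha<2$ and $\beta\leq 2$ enter — and, above all, keeping track of the branch of $\mu^{1-\beta}$ and of the orientation of the slit contour carefully enough that the residues and the slit integral reassemble with the correct signs into $\omega_{\alpha,\beta}+l_{\alpha,\beta}$.
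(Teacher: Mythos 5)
Your proposal is correct and follows essentially the same route as the paper: the Hankel integral representation of $E_{\alpha,\beta}(-z)$, the residue theorem applied to the two simple poles $a_\alpha(z)$, $b_\alpha(z)$ (which exist precisely because $1<\alpha<2$), vanishing of the large arcs and the small circle around the origin, and the substitution $t\mapsto z^{1/\alpha}s^{1/\alpha}$ turning the combined slit-edge integrals into $l_{\alpha,\beta}(z)$. The residue values and the combined numerator on the slit match the paper's computation, so no further comparison is needed.
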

	
	\begin{proof}
		Recall that Mittag-Leffler function can be defined by
		\begin{equation}
			E_{\alpha,\beta}(-z) = \frac{1}{2\pi i}\int_{Ha}\frac{t^{\alpha-\beta}e^t}{t^\alpha+z} dt ,\label{M-L}
		\end{equation}
		where  $Ha$ is the Hankel path, i.e. a path starts and ends at $-\infty$ and encircles the circular disk $\vert t\vert\leq \vert z\vert^{\frac{1}{\alpha}}$ positively. The integrand $\Phi(t)=\frac{t^{\alpha-\beta}e^t}{t^\alpha+z}$ of (\ref{M-L}) has two poles $a_\alpha(z)$ and $b_\alpha(z)$, because $1<\alpha<2$. Proceeding as in  \cite[Lemma 1.1]{YF}, the residues theorem yields 
		\begin{align*}
			2\pi i E_{\alpha,\beta}(-z) &= \int_{\infty}^{R}\Phi(te^{-\pi i})d(te^{-\pi i}) + 2\pi i \left(Res(\Phi,a_\alpha(z)) + Res(\Phi,b_\alpha(z))\right)\\
			& -\int_{\epsilon}^{R}\Phi(te^{-\pi i})d(te^{-\pi i})- \int_{R}^{\epsilon}\Phi(te^{\pi i})d(te^{\pi i})\\ 
			&- \int_{-\pi}^{\pi}\Phi(\epsilon e^{\theta i})d(\epsilon e^{\theta i})+ \int_{R}^{\infty}\Phi(te^{\pi i})d(te^{\pi i}) \\
			& := I_1(R) +  2\pi i \left(Res(\Phi,a_\alpha(z)) + Res(\Phi,b_\alpha(z))\right) -I_2(\epsilon,R) - I_3(R,\epsilon) \\
			& - I_4(\epsilon) + I_5(R).
		\end{align*}
		We first get
		\begin{equation}
			\lim_{R\rightarrow\infty} I_1(R) = \lim_{\epsilon\rightarrow0^+} I_4(\epsilon) = \lim_{R\rightarrow\infty} I_5(R) = 0\nonumber ,
		\end{equation}
		and an easy computation yields 
		\begin{align*}
			l_{\alpha,\beta}(z)=-\frac{1}{2\pi i}\lim_{R\rightarrow\infty,\,\epsilon\rightarrow0^+} I_2(\epsilon,R) +I_3(R,\epsilon).
		\end{align*}
		Indeed, 
		\begin{align}
			&\frac{1}{2\pi i}\lim_{\epsilon\rightarrow 0^+,R\rightarrow\infty}\left[I_2(\epsilon,R) + I_3(R,\epsilon)\right]\nonumber\\
			&= \frac{1}{2\pi i}\left(\int_{0}^{\infty} \frac{e^{-t} t^{\alpha-\beta}e^{(\alpha-\beta)\pi i}}{t^\alpha e^{\alpha\pi i} +z} dt  - \int_{0}^{\infty} \frac{e^{-t} t^{\alpha-\beta}e^{-(\alpha-\beta)\pi i}}{t^\alpha e^{-\alpha\pi i} + z} dt\right) \nonumber\\
			& = -\frac{1}{2\pi i}2i \int_{0}^{\infty} e^{-t} t^{\alpha-\beta}\frac{z\sin[(\alpha-\beta)\pi] -t^\alpha\sin(\beta\pi) }{t^{2\alpha}+2t^\alpha z\cos(\alpha\pi) + z^2} dt\label{E-dec1}
			\\
			&=-\int_0^\infty \mathcal{H}_{\alpha,\beta}(s)\exp{(- z^{\frac{1}{\alpha}}s^{\frac{1}{\alpha}})}z^{\frac{1}{\alpha}(1-\beta)}ds\label{E-dec2}, \\
			& = -l_{\alpha,\beta}(z) \nonumber,
		\end{align}
		where the change $t\mapsto z^{\frac{1}{\alpha}}s^{\frac{1}{\alpha}}$ was used from (\ref{E-dec1}) to (\ref{E-dec2}). Also, we obtain
		\begin{equation*}
			Res(\Phi,a_\alpha(z)) = \frac{z^{\frac{1-\beta}{\alpha}}}{\alpha} \exp(a_\alpha(z)+\pi i(1-\beta)/\alpha)
		\end{equation*}
		and
		\begin{equation*}
			Res(\Phi,b_\alpha(z)) = \frac{z^{\frac{1-\beta}{\alpha}}}{\alpha} \exp(b_\alpha(z) -\pi i(1-\beta)/\alpha).
		\end{equation*}
		These give us the desired decomposition.
	\end{proof}
	
	In particular, for $z=\vert\xi\vert^2$, $\beta=1$ and $\beta=2$ in Proposition \ref{dec-gen-E}, we earn the decompositions given in \cite[Lemma 1.1]{YF},
	\begin{align}\label{key1}
		E_{\alpha,1}(-z)=\omega_{\alpha,1}(z)+l_{\alpha,1}(z)
	\end{align}
	and in \cite[Lemma 1.2-(IV)]{YF1},
	\begin{align}\label{key2}
		E_{\alpha,2}(-z)=\omega_{\alpha,2}(z)+l_{\alpha,2}(z).
	\end{align}
	Notice that $\omega_{\alpha,1}(z)$ oscillates with frequency $\sin(\pi/\alpha)$ and  amplitude decaying exponentially with rate $\vert \cos(\pi/\alpha)\vert$, in view of
	\begin{align}
		\omega_{\alpha,1}(z)=\frac{2}{\alpha}\exp (z^{\frac{1}{\alpha}}\cos(\pi/\alpha))\cos(z^{\frac{1}{\alpha}}\sin (\pi/\alpha))\nonumber.
	\end{align}
	On the other hand, the function $l_{\alpha,1}(z)$ exhibits the relaxation phenomena of $E_{\alpha,1}(-z)$, namely,
	\begin{equation}
		l_{\alpha,1}(z)=\int_{0}^{\infty}\mathcal{H}_{\alpha,1}(s)\exp(-s^{\frac{1}{\alpha}}z^{\frac{1}{\alpha}})ds=\int_{0}^{\infty}\exp(-s^{\frac{1}{\alpha}}z^{\frac{1}{\alpha}})d\mu_{\alpha}(s),\nonumber
	\end{equation}
	where  
	\begin{equation}
		\mathcal{H}_{\alpha,1}(s)=\frac{\sin (\alpha \pi)}{\alpha\pi}\frac{1}{s^{2}+2s\cos(\alpha\pi)+1}
	\end{equation}
	and  $d\mu_{\alpha}(s)=\mathcal{H}_{\alpha,1}(s)ds$ is a finite measure in $\mathbb{R}_{+}$ such that $\mu_{\alpha}(\mathbb{R}_{+})=2-\frac{2}{\alpha}.$ Furthermore, when $\beta=\alpha$, the decomposition (\ref{decomp}) is useful to show that the map $G_{\alpha,\beta}(\cdot)$, $\beta=1,2$, is differentiable for $t>0$. Indeed, see (\ref{cgal1}) and (\ref{cgal2}) below.
	
	\subsection{Mikhlin estimates for $E_{\alpha,\beta}(-\sigma(\xi))$}\label{Mikhlin}
	In the sequence we provide estimates for $E_{\alpha,1}(-\sigma(\xi))$,  $E_{\alpha,2}(-\sigma(\xi))$ and  $E_{\alpha,\alpha}(-\sigma(\xi))$,  where $\sigma \in C^{\infty}(\mathbb{R}^{N}\backslash\{0\} ; (0,\infty))$ is the symbol of the Fourier multiplier $\sigma(D)$
	\begin{align}
		\sigma(D)f=\mathcal{F}^{-1}\sigma(\xi)\mathcal{F}f(\xi),\; f\in\mathcal{S}(\mathbb{R}^N).\nonumber
	\end{align}
	Consider the change $z\mapsto \sigma(\xi)$ into (\ref{key1}) and write it as follows
	\begin{equation}\label{key-decomp1}
		E_{\alpha,1}(-\sigma(\xi))=
		\omega_{\alpha,1}(\sigma(\xi))+l_{\alpha,1}(\sigma(\xi)).
	\end{equation}
	
	\begin{proposition}\label{fund-lemma} 
		Let $\sigma(\xi)\in C^{\infty}(\mathbb{R}^{N}\backslash\{0\})$ be a function homogeneous of degree $k>0$ and such that
		\begin{equation}\label{key-sym}
			\left\vert \frac{\partial^{\gamma}}{\partial \xi^{\gamma}}\left[\sigma(\xi)\right] \right\vert \leq
			A\left\vert \xi\right\vert ^{k-\left\vert \gamma\right\vert },\;\text{ } \xi\neq0
		\end{equation} 
		for all multi-index $\gamma\in(\mathbb{N}\cup\{0\})^{N}$ with $\left\vert \gamma\right\vert
		\leq\lbrack N/2]+1$. Let $1<\alpha<2$ and $0\leq\delta<k$, there exists $C>0 $ such that
		\begin{equation}
			\left\vert \frac{\partial^{\gamma}}{\partial \xi^{\gamma}}\left[ \left\vert
			\xi\right\vert ^{\delta}E_{\alpha,1}(-\sigma(\xi))\right] \right\vert \leq
			CA\left\vert \xi\right\vert ^{-\left\vert \gamma\right\vert },\;\text{ } \xi\neq0.
			\label{point1}
		\end{equation}
	\end{proposition}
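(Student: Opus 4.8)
Here is the plan.

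The plan is to feed the decomposition (\ref{key-decomp1}), $E_{\alpha,1}(-\sigma(\xi))=\omega_{\alpha,1}(\sigma(\xi))+l_{\alpha,1}(\sigma(\xi))$, into (\ref{point1}) and to prove the bound separately for the oscillatory piece $|\xi|^{\delta}\omega_{\alpha,1}(\sigma(\xi))$ and the relaxation piece $|\xi|^{\delta}l_{\alpha,1}(\sigma(\xi))$. Two preliminary observations are used throughout. First, since $\sigma$ is positive, smooth away from the origin and homogeneous of degree $k$, one has $c_0|\xi|^{k}\le\sigma(\xi)\le A|\xi|^{k}$, where $c_0=\min_{\mathbb{S}^{N-1}}\sigma>0$ and the upper bound is the case $\gamma=0$ of (\ref{key-sym}). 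Second, combining (\ref{key-sym}) with this lower bound through the Fa\`a di Bruno formula gives, for every real $a$ and every multi-index with $|\gamma|\le[N/2]+1$,
\begin{equation*}
\left|\partial_{\xi}^{\gamma}\big(\sigma(\xi)^{a}\big)\right|\le C\,|\xi|^{ak-|\gamma|}\qquad\text{and}\qquad\left|\partial_{\xi}^{\gamma}\big(|\xi|^{\delta}\big)\right|\le C\,|\xi|^{\delta-|\gamma|},\qquad\xi\neq0 .
\end{equation*}
These reduce the argument to bookkeeping of powers of $|\xi|$ in chain-rule expansions, carried out separately on $|\xi|\le1$ and $|\xi|\ge1$.

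For the oscillatory piece one uses the explicit form of $\omega_{\alpha,1}$ recorded just after (\ref{key2}), namely $\omega_{\alpha,1}(\sigma(\xi))=\tfrac{2}{\alpha}\exp\!\big(\sigma(\xi)^{1/\alpha}\cos(\pi/\alpha)\big)\cos\!\big(\sigma(\xi)^{1/\alpha}\sin(\pi/\alpha)\big)$, together with the fact that $\cos(\pi/\alpha)<0$ because $\pi/\alpha\in(\pi/2,\pi)$ for $1<\alpha<2$. Applying the Leibniz rule to $|\xi|^{\delta}\omega_{\alpha,1}(\sigma(\xi))$ and the chain rule to $\omega_{\alpha,1}(\sigma(\xi))$, and inserting the preliminary bounds, one finds that $\partial_{\xi}^{\gamma}[|\xi|^{\delta}\omega_{\alpha,1}(\sigma(\xi))]$ is a finite sum of terms dominated by $|\xi|^{\delta-|\gamma|}P\!\big(\sigma(\xi)^{1/\alpha}\big)\exp\!\big(\sigma(\xi)^{1/\alpha}\cos(\pi/\alpha)\big)$, with $P$ a polynomial of degree at most $[N/2]+1$ with non-negative coefficients; crucially the exponential factor is present in \emph{every} term, including the one where all derivatives fall on $|\xi|^{\delta}$. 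Since $u\mapsto P(u)e^{u\cos(\pi/\alpha)}$ is bounded on $[0,\infty)$ and $\sigma(\xi)^{1/\alpha}\ge c_0^{1/\alpha}|\xi|^{k/\alpha}$, on $|\xi|\le1$ each term is $\le C|\xi|^{\delta-|\gamma|}\le C|\xi|^{-|\gamma|}$ (using $\delta\ge0$), while on $|\xi|\ge1$ one keeps the decay and writes the term as $|\xi|^{-|\gamma|}\big[|\xi|^{\delta}P(\sigma(\xi)^{1/\alpha})e^{\sigma(\xi)^{1/\alpha}\cos(\pi/\alpha)}\big]\le C|\xi|^{-|\gamma|}$, the bracket being bounded. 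This proves (\ref{point1}) for $|\xi|^{\delta}\omega_{\alpha,1}(\sigma(\xi))$.

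For the relaxation piece the substitution $r=s^{1/\alpha}$ turns $l_{\alpha,1}(\sigma(\xi))$ into a Laplace transform in the variable $\sigma(\xi)^{1/\alpha}$,
\begin{equation*}
l_{\alpha,1}(\sigma(\xi))=\int_{0}^{\infty}\widetilde{\mathcal{H}}(r)\,e^{-r\sigma(\xi)^{1/\alpha}}\,dr,\qquad\widetilde{\mathcal{H}}(r)=\alpha\,r^{\alpha-1}\,\mathcal{H}_{\alpha,1}(r^{\alpha}),
\end{equation*}
where, since $\mathcal{H}_{\alpha,1}$ is bounded and $\mathcal{H}_{\alpha,1}(s)=O(s^{-2})$ as $s\to\infty$, one has $\widetilde{\mathcal{H}}(r)=O(r^{\alpha-1})$ as $r\to0$ and $\widetilde{\mathcal{H}}(r)=O(r^{-\alpha-1})$ as $r\to\infty$. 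Writing $g$ for this Laplace transform, differentiation under the integral sign (legitimate for each fixed $\xi\neq0$ by dominated convergence) and the chain rule express $\partial_{\xi}^{\gamma}[|\xi|^{\delta}l_{\alpha,1}(\sigma(\xi))]$ as a sum of products of $\partial^{\gamma'}(|\xi|^{\delta})$, one factor $g^{(j)}(\sigma(\xi)^{1/\alpha})$, and $j$ factors $\partial^{\beta_{i}}(\sigma(\xi)^{1/\alpha})$ with $\sum\beta_{i}=\gamma-\gamma'$. The key estimate is on $g^{(j)}(w)=(-1)^{j}\int_{0}^{\infty}r^{j}\widetilde{\mathcal{H}}(r)e^{-rw}\,dr$: by Watson's lemma together with the behavior of $\widetilde{\mathcal{H}}$ at $0$ and at $\infty$, $|g^{(j)}(w)|\le C_{j}w^{-j-\alpha}$ for $w\ge1$ (all $j$), while for $w\le1$ one has $|g^{(j)}(w)|\le C_{j}$ for $j\in\{0,1\}$ and $|g^{(j)}(w)|\le C_{j}w^{\alpha-j}$ for $j\ge2$; the borderline case $j=1$, where finiteness of $\int_{0}^{\infty}r\,|\widetilde{\mathcal{H}}(r)|\,dr$ is needed, is precisely where the hypothesis $\alpha>1$ is consumed. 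Substituting $w=\sigma(\xi)^{1/\alpha}$, which is comparable to $|\xi|^{k/\alpha}$, and using the preliminary bounds, one checks that each term is $\le C|\xi|^{\delta-|\gamma|}$ on $|\xi|\le1$ (using $\delta\ge0$) and $\le C|\xi|^{\delta-k-|\gamma|}\le C|\xi|^{-|\gamma|}$ on $|\xi|\ge1$ (using $\delta<k$); this gives (\ref{point1}) for $|\xi|^{\delta}l_{\alpha,1}(\sigma(\xi))$ and completes the proof.

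The main obstacle is the relaxation term: unlike the Gaussian kernel it decays only polynomially, and its density $\mathcal{H}_{\alpha,1}$ has only finitely many finite moments, so the higher $\xi$-derivatives of $l_{\alpha,1}(\sigma(\xi))$ genuinely blow up as $\xi\to0$. Getting the exponent of $|\xi|$ right in that regime — and matching it against $|\xi|\ge1$, where one instead needs the extra decay $w^{-\alpha}$ in $g^{(j)}$ to absorb the factor $|\xi|^{\delta}$ through $\delta<k$ — is where the constraints $1<\alpha<2$ and $0\le\delta<k$ are really used, and it is the only step that goes beyond routine Leibniz and chain-rule bookkeeping. The oscillatory term, by contrast, is handled uniformly by its exponential decay.
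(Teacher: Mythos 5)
Your proof is correct, and it follows the paper's skeleton for the most part: the same decomposition (\ref{key-decomp1}) of $E_{\alpha,1}(-\sigma(\xi))$ into the oscillatory part $\omega_{\alpha,1}$ and the relaxation part $l_{\alpha,1}$, the same reduction $\vert\partial^{\gamma}\sigma^{a}\vert\le C\vert\xi\vert^{ak-\vert\gamma\vert}$ as in (\ref{aux-deriv-symbol}), and an essentially identical treatment of $\omega_{\alpha,1}$ via the sign $\cos(\pi/\alpha)<0$. Where you genuinely diverge is the relaxation term. The paper differentiates under the integral sign and then exploits the homogeneity $\sigma(s^{1/k}\xi)=s\,\sigma(\xi)$ to absorb the powers of $s$ into a rescaled variable $s^{1/k}\xi$, obtaining the single uniform bound $\vert\partial^{\gamma}[\vert\xi\vert^{\delta}e^{-s^{1/\alpha}\sigma(\xi)^{1/\alpha}}]\vert\le CAs^{-\delta/k}\vert\xi\vert^{-\vert\gamma\vert}$ (see (\ref{aux-est4})); the hypotheses then appear as integrability of $s^{-\delta/k}\mathcal{H}_{\alpha,1}(s)$ on $(0,\infty)$, with $\delta<k$ controlling $s\to0$ and $\delta\ge0$ controlling $s\to\infty$, and no case analysis in $\vert\xi\vert$ or in the order of differentiation is needed. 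You instead rewrite $l_{\alpha,1}$ as a Laplace transform $g(w)$ at $w=\sigma(\xi)^{1/\alpha}$ and prove two-sided bounds on $g^{(j)}$ (Watson's lemma for $w\ge1$, moment/truncation analysis for $w\le1$), then do the chain-rule bookkeeping separately on $\vert\xi\vert\le1$ and $\vert\xi\vert\ge1$; I checked the exponents and they close correctly in every case. Your route is longer and requires the case split on $j$ and on $\vert\xi\vert$, but it buys a sharper picture of the mechanism: it isolates exactly where $\alpha>1$ is consumed (finiteness of the first moment of the relaxation density at infinity) and where $\delta<k$ is consumed (the extra decay $w^{-\alpha}$ of $g^{(j)}$ at infinity absorbing $\vert\xi\vert^{\delta}$), information that the paper's rescaling argument compresses into the single convergence condition on the $s$-integral.
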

	
	\begin{proof} Taking into account (\ref{key-sym}) we obtain
		\begin{align}
			\left\vert \frac{\partial^{\gamma} }{\partial \xi^{\gamma}} [\sigma(\xi)]^{l}\right\vert\leq CA\, \vert\xi\vert^{-\vert\gamma\vert}\vert \xi\vert^{k l}, \text{ for all } l\in  \mathbb{R}.\label{aux-deriv-symbol}
		\end{align}
		Hence, the $\gamma^{th}-$order derivative of the parcel $\vert\xi\vert^{\delta}\omega_{\alpha,1}(\sigma(\xi))$ can be expressed 
		\begin{align}
			\left\vert\frac{\partial^{\gamma} }{\partial \xi^{\gamma}}\left[\vert\xi\vert^{\delta}\omega_{\alpha,1}(\sigma(\xi))\right]\right\vert&=\left\vert\frac{\partial^{\gamma} }{\partial \xi^{\gamma}}\left[\vert\xi\vert^{\delta}\exp (e^{\frac{i\pi}{\alpha}}\sigma(\xi)^{\frac{1}{\alpha}}) +\vert\xi\vert^{\delta}\exp (e^{-\frac{i\pi}{\alpha}}\sigma(\xi)^{\frac{1}{\alpha}})\right]\right\vert\nonumber\\
			&\leq C\vert\xi\vert^{-\vert\gamma\vert}\left[c_0\vert\xi\vert^{\delta}+c_1\vert\xi\vert^{\delta+\frac{k}{\alpha}}+\cdots +c_{\vert\gamma\vert}\vert\xi\vert^{\delta+\frac{\vert\gamma\vert k}{\alpha}}\right] e^{\cos(\frac{\pi}{\alpha})\sigma(\xi)^{\frac{1}{\alpha}}}\nonumber\\
			&\leq CA\vert\xi\vert^{-\vert\gamma\vert}. \label{w-est}
		\end{align}
		To estimate the parcel $l_{\alpha,1}(\sigma(\xi))$, recall that
		\begin{align}
			l_{\alpha,1}(\sigma(\xi))=\int_0^{\infty}\mathcal{H}_{\alpha,1}(s)\exp(-s^{\frac{1}{\alpha}}\sigma(\xi)^{\frac{1}{\alpha}})ds.\nonumber
		\end{align}
		Using the homogeneity $\sigma(\lambda \xi)=\lambda^{k}\sigma(\xi)$, we estimate
		\begin{align}
			&\left\vert\frac{\partial^{\gamma}}{\partial\xi^{\gamma}}\left[\left\vert \xi\right\vert^{\delta}e^{-s^{\frac{1}{\alpha}}\sigma(\xi)^{\frac{1}{\alpha}}}\right]\right\vert
			\leq C \vert\xi\vert^{-\vert\gamma\vert}\left[c_0\vert\xi\vert^{\delta}+c_1\vert\xi\vert^{\delta+\frac{k}{\alpha}}s^{\frac{1}{\alpha}}+\cdots +c_{\vert\gamma\vert}\vert\xi\vert^{\delta+\frac{\vert\gamma\vert k}{\alpha}}s^{\frac{\vert\gamma\vert}{\alpha}}\right] e^{-s^{\frac{1}{\alpha}}\sigma(\xi)^{\frac{1}{\alpha}}}\nonumber\\
			&=C s^{-\frac{\delta}{k}}\vert\xi\vert^{-\vert\gamma\vert}\left[c_0\left\vert s^{\frac{1}{k}}\xi\right\vert^{\delta}+c_1\left\vert s^{\frac{1}{k}}\xi\right\vert^{\delta+\frac{k}{\alpha}}+\cdots +c_{\vert\gamma\vert}\left\vert s^{\frac{1}{k}}\xi\right\vert^{\delta+\frac{\vert\gamma\vert k}{\alpha}}\right] e^{-[\sigma(s^{\frac{1}{k}}\xi)]^{\frac{1}{\alpha}}} \label{aux-est4}\\
			&\leq CAs^{-\frac{\delta}{k}}\vert\xi\vert^{-\vert\gamma\vert} .\nonumber
		\end{align}
		Then,
		\begin{align}
			\left\vert\frac{\partial^{\gamma}}{\partial\xi^{\gamma}}\left[\vert\xi\vert^{\delta}l_{\alpha,1}(\sigma(\xi))\right]\right\vert&=\frac{\sin (\alpha \pi)}{\alpha\pi}\int_{0}^{\infty}\frac{1}{s^{2}+2s\cos(\alpha\pi)+1}\left\vert\frac{\partial^{\gamma}}{\partial\xi^{\gamma}}\left[\vert\xi\vert^{\delta}e^{ -s^{\frac{1}{\alpha}}\sigma(\xi)^{\frac{1}{\alpha}}}\right]\right\vert ds\nonumber\\
			&\leq CA\left(\frac{\sin (\alpha \pi)}{\alpha\pi}\int_{0}^{\infty}\frac{s^{-\frac{\delta}{k}}}{s^{2}+2s\cos(\alpha\pi)+1}ds\right) \vert\xi\vert^{-\vert\gamma\vert}\nonumber\\
			&\leq CA\, \vert\xi\vert^{-\vert\gamma\vert} ,\nonumber
		\end{align}
		because $0\leq \delta<k$. These estimates prove the proposition.
	\end{proof}
	
	\smallskip
	In general, we obtain the following proposition for the two-parametric Mittag-Leffler function.
	\begin{proposition}\label{fund-lemma2} 
		Let $\sigma(\xi)\in C^{\infty}(\mathbb{R}^{N}\backslash\{0\})$ be a homogeneous function of degree $k>0$ and satisfying (\ref{key-sym}), for all multi-index $\gamma\in(\mathbb{N}\cup\{0\})^{N}$ with $\left\vert \gamma\right\vert
		\leq\lbrack N/2]+1$.  Then, there exists a positive constant $C$ (independent of $\delta$ and $k$)  such that 
		\begin{equation}
			\left\vert \frac{\partial^{\gamma}}{\partial \xi^{\gamma}}\left[ \left\vert
			\xi\right\vert ^{\delta}E_{\alpha,\beta}(-\sigma(\xi))\right] \right\vert \leq
			CA\left\vert \xi\right\vert ^{-\left\vert \gamma\right\vert },\;\text{ } \xi\neq0
			\label{point2}
		\end{equation}
		provided that $1<\alpha<2$ and  $\,k\left(\frac{\beta}{\alpha}-\frac{1}{\alpha}\right)\leq\delta<k$.
	\end{proposition}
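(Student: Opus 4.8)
The plan is to reproduce the two‑part scheme from the proof of Proposition~\ref{fund-lemma}, now starting from the general decomposition of Proposition~\ref{dec-gen-E}: replacing $z$ by $\sigma(\xi)$ in \eqref{decomp} gives $E_{\alpha,\beta}(-\sigma(\xi))=\omega_{\alpha,\beta}(\sigma(\xi))+l_{\alpha,\beta}(\sigma(\xi))$, so it is enough to bound the $\gamma^{th}$-order $\xi$-derivatives of $|\xi|^{\delta}\omega_{\alpha,\beta}(\sigma(\xi))$ and of $|\xi|^{\delta}l_{\alpha,\beta}(\sigma(\xi))$ separately by $CA|\xi|^{-|\gamma|}$. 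As before, the two elementary tools are \eqref{aux-deriv-symbol} (derivatives of powers of $\sigma$) and the sign fact $\mathrm{Re}\,a_{\alpha}(\sigma(\xi))=\mathrm{Re}\,b_{\alpha}(\sigma(\xi))=\cos(\pi/\alpha)\,\sigma(\xi)^{1/\alpha}<0$, valid because $\pi/2<\pi/\alpha<\pi$ for $1<\alpha<2$.

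For the oscillatory part, \eqref{axi0} shows that $\omega_{\alpha,\beta}(\sigma(\xi))$ is, up to the constant $1/\alpha$, the product of $\sigma(\xi)^{(1-\beta)/\alpha}$ with $e^{a_{\alpha}(\sigma(\xi))}$ and $e^{b_{\alpha}(\sigma(\xi))}$. Differentiating $|\xi|^{\delta}\sigma(\xi)^{(1-\beta)/\alpha}e^{a_{\alpha}(\sigma(\xi))}$ via the Leibniz rule and \eqref{aux-deriv-symbol} yields, exactly as in \eqref{w-est}, a bound
\[
C|\xi|^{-|\gamma|}\Bigl[c_{0}|\xi|^{\delta+\frac{k(1-\beta)}{\alpha}}+c_{1}|\xi|^{\delta+\frac{k(1-\beta)}{\alpha}+\frac{k}{\alpha}}+\cdots+c_{|\gamma|}|\xi|^{\delta+\frac{k(1-\beta)}{\alpha}+\frac{|\gamma|k}{\alpha}}\Bigr]e^{\cos(\pi/\alpha)\,\sigma(\xi)^{1/\alpha}}.
\]
The exponential decays faster than any power as $|\xi|\to\infty$, so the only constraint comes from $|\xi|\to0$, where the dangerous term is the one with the smallest exponent $\delta+k(1-\beta)/\alpha=\delta-k(\beta-1)/\alpha$; this is $\ge0$ precisely because $k(\beta/\alpha-1/\alpha)\le\delta$. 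Thus $\bigl|\partial^{\gamma}_{\xi}[|\xi|^{\delta}\omega_{\alpha,\beta}(\sigma(\xi))]\bigr|\le CA|\xi|^{-|\gamma|}$, and this is the first appearance of the lower bound on $\delta$.

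For the completely monotone part, I would differentiate under the integral sign in \eqref{axi}, rewritten as $l_{\alpha,\beta}(\sigma(\xi))=\int_{0}^{\infty}\mathcal{H}_{\alpha,\beta}(s)\,\sigma(\xi)^{(1-\beta)/\alpha}e^{-s^{1/\alpha}\sigma(\xi)^{1/\alpha}}\,ds$. The prefactor $|\xi|^{\delta}\sigma(\xi)^{(1-\beta)/\alpha}$ is homogeneous of degree $\delta':=\delta+k(1-\beta)/\alpha\ge0$, so the rescaling device of \eqref{aux-est4} (substituting $s^{1/k}\xi$ for $\xi$ in the bracketed sum) gives $\bigl|\partial^{\gamma}_{\xi}[|\xi|^{\delta}\sigma(\xi)^{(1-\beta)/\alpha}e^{-s^{1/\alpha}\sigma(\xi)^{1/\alpha}}]\bigr|\le CA\,s^{-\delta'/k}|\xi|^{-|\gamma|}$, again using $\delta'\ge0$ to keep the bracket bounded as $s^{1/k}\xi\to0$. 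It then remains to verify $\int_{0}^{\infty}|\mathcal{H}_{\alpha,\beta}(s)|\,s^{-\delta'/k}\,ds<\infty$. Since $s^{2}+2s\cos(\alpha\pi)+1=(s+\cos(\alpha\pi))^{2}+\sin^{2}(\alpha\pi)$ is bounded below and comparable to $1+s^{2}$, \eqref{axi2} yields $|\mathcal{H}_{\alpha,\beta}(s)|\le C\,s^{(1-\beta)/\alpha}(1+s)/(1+s^{2})$, and the power collapses since $s^{(1-\beta)/\alpha}s^{-\delta'/k}=s^{-\delta/k}$. Hence the integrand is at most $C\,s^{-\delta/k}(1+s)/(1+s^{2})$, integrable at $s=0$ exactly when $\delta<k$ (the upper bound) and, at $s=\infty$, when $\delta>0$ — automatic for $1<\beta<2$ because then $\delta\ge k(\beta-1)/\alpha>0$, and for $\beta\in\{1,2\}$ because $\sin(\beta\pi)=0$ removes the linear term, so that $\mathcal{H}_{\alpha,\beta}(s)$ decays like $s^{(1-\beta)/\alpha-2}$. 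This gives $\bigl|\partial^{\gamma}_{\xi}[|\xi|^{\delta}l_{\alpha,\beta}(\sigma(\xi))]\bigr|\le CA|\xi|^{-|\gamma|}$, and adding the two bounds proves \eqref{point2}.

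The main obstacle is not any one estimate — each step transcribes one already performed for $\beta=1$ — but the bookkeeping that ties both endpoints of the admissible range of $\delta$ to the decomposition: the lower bound $k(\beta/\alpha-1/\alpha)\le\delta$ is forced \emph{twice} (nonnegativity of the leading power of $\omega_{\alpha,\beta}$ near $\xi=0$, and nonnegativity of $\delta'$, which is what legitimizes the rescaling in the $l_{\alpha,\beta}$-term), while the upper bound $\delta<k$ is exactly the integrability threshold of $s^{-\delta/k}$ at the origin. One must also check that the rescaling identity \eqref{aux-est4} genuinely survives the extra homogeneous factor $\sigma(\xi)^{(1-\beta)/\alpha}$, and that in the endpoint cases $\beta=1,2$ the resulting constants stay uniform, so that the $C$ in \eqref{point2} can indeed be taken independent of $\delta$ and $k$.
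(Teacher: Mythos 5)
Your proof follows the paper's argument essentially verbatim: the same decomposition $E_{\alpha,\beta}=\omega_{\alpha,\beta}+l_{\alpha,\beta}$ from Proposition~\ref{dec-gen-E}, the same Leibniz splitting off of the homogeneous factor $\sigma(\xi)^{\frac{1-\beta}{\alpha}}$, the same rescaling device producing the weight $s^{-\frac{\delta}{k}+\frac{\beta-1}{\alpha}}$, and the same identification of the two endpoints of the admissible range of $\delta$. If anything you are slightly more careful than the paper, which justifies convergence of the integrals $I$ and $II$ only by ``$\delta<k$'' and leaves implicit the $s\to\infty$ behaviour of $II$, for which one needs $\delta>0$ or $\sin(\beta\pi)=0$, exactly as you point out.
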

	
	\begin{proof} The proof is similar the proof of Proposition \ref{fund-lemma}. Indeed, proceeding as in (\ref{w-est}), it follows that
		\begin{align}
			\left\vert \frac{\partial^{\gamma_1}}{\partial \xi^{\gamma_1}} h_{\alpha,\beta}(\xi)\right\vert 
			&:=\left\vert\frac{\partial^{\gamma_1} }{\partial \xi^{\gamma_1}}\left[\vert\xi\vert^{\delta}\exp \left(a_\alpha(\sigma(\xi))+\frac{1-\beta}{\alpha}\pi i\right) +\vert\xi\vert^{\delta}\exp \left(b_\alpha(\sigma(\xi))-\frac{1-\beta}{\alpha}\pi i\right)\right]\,\right\vert \nonumber\\
			&\leq CA\vert\xi\vert^{-\vert\gamma_1\vert}\left[c_0\vert\xi\vert^{\delta}+c_1\vert\xi\vert^{\delta+\frac{k}{\alpha}}+\cdots +c_{\vert\gamma_1\vert}\vert\xi\vert^{\delta+\frac{\vert\gamma_1\vert k}{\alpha}}\right] e^{\cos(\pi/\alpha) \sigma( \xi )^ \frac{1}{\alpha}},\label{halpha}
		\end{align}
		for all multi-index $\gamma_1$. Hence, Leibniz's rule, (\ref{aux-deriv-symbol}) and (\ref{halpha}) give us 
		\begin{align}
			& \left\vert\frac{\partial^{\gamma_1}}{\partial \xi^{\gamma}}\left[\vert\xi\vert^{\delta}\omega_{\alpha,\beta}(\sigma(\xi))\right]\right\vert  \leq  \sum_{\gamma_1\leq \gamma}\binom{\gamma}{\gamma_1}\left\vert \frac{\partial^{\gamma_1}}{\partial \xi^{\gamma_1}}[\sigma(\xi)]^{\frac{1-\beta}{\alpha}}\right\vert\,\left\vert \frac{\partial^{\gamma-\gamma_1}}{\partial \xi^{\gamma-\gamma_1}} \left[h_{\alpha,\beta}(\xi)\right]\right\vert\nonumber\\
			&\leq CA\vert\xi\vert^{-\vert\gamma_1\vert -\vert\gamma-\gamma_1\vert}\left[c_0\vert\xi\vert^{\delta+k\left(\frac{1}{\alpha}-\frac{\beta}{\alpha}\right)}+\cdots +c\vert\xi\vert^{\delta+k\left(\frac{1}{\alpha}-\frac{\beta}{\alpha}\right)+\frac{\vert\gamma-\gamma_1\vert k}{\alpha}}\right] e^{\cos(\pi/\alpha) \sigma(\xi)^ \frac{1}{\alpha}}\nonumber\\
			&\leq C A \vert\xi\vert^{-\vert\gamma\vert},\nonumber
		\end{align}
		in view of $\delta+k\left(\frac{1}{\alpha}-\frac{\beta}{\alpha}\right)\geq0$. Also, using (\ref{aux-deriv-symbol}) and (\ref{aux-est4}), the Leibniz's rule yields
		\begin{align}
			\left\vert \frac{\partial^{\gamma}}{\partial \xi^{\gamma}} \left[\left(\sigma(\xi)^{\frac{1-\beta}{\alpha}}\right)\left(\vert\xi\vert^{\delta}\exp (-s^{\frac{1}{\alpha}}\sigma(\xi)^{\frac{1}{\alpha}})\right)\right]\right\vert\nonumber 
			\leq C A\vert\xi\vert^{-\vert\gamma\vert}s^{-\frac{\delta}{k}+\frac{\beta}{\alpha}-\frac{1}{\alpha}}.\nonumber
		\end{align}
		
		Hence, we estimate
		\begin{align}
			\left\vert\frac{\partial^{\gamma}}{\partial \xi^{\gamma}} \left[\vert\xi\vert^{\delta}l_{\alpha,\beta}(\sigma(\xi))\right] \right\vert &\leq 
			\int_{0}^{\infty} \vert \mathcal{H}_{\alpha,\beta}(s) \vert \left\vert \frac{\partial^{\gamma}}{\partial \xi^{\gamma}} \left[\sigma(\xi)^{\frac{1-\beta}{\alpha}}\vert\xi\vert^{\delta}\exp (-s^{\frac{1}{\alpha}}\sigma(\xi)^{\frac{1}{\alpha}})\right]\right\vert ds\nonumber\\
			&= CA\,(I +II)\vert\xi\vert^{-\vert\gamma\vert},\nonumber\\
			&\leq C \vert\xi\vert^{-\vert\gamma\vert},\nonumber
		\end{align}
		where the  integrals $I$ and $II$ are defined by (see (\ref{axi2}))
		\begin{equation}
			I=\frac{\sin [(\alpha -\beta) \pi]}{\alpha\pi}\int_{0}^{\infty}\frac{s^{-\frac{\delta}{k}}}{s^{2}+2s\cos(\alpha\pi)+1} ds
		\end{equation}
		and
		\begin{equation}
			II=-\frac{\sin (\beta \pi)}{\alpha\pi}\int_{0}^{\infty}\frac{s^{1-\frac{\delta}{k}}}{s^{2}+2s\cos(\alpha\pi)+1} ds.
		\end{equation}
		Those integrals are finite in view of $\delta<k$. This completes the proof of the proposition.
	\end{proof}

\section{Sobolev-Morrey estimates}\label{sme}
	In this section we obtain fundamental estimates which will be important to prove  Theorem \ref{gw}.

\subsection{Linear estimates}\label{linear-est}

Here, we present some estimates of the  Mittag-Leffler operators $\left\{G_{\alpha,\beta}(t)\right\}_{t\geq 0}$ in  Sobolev-Morrey spaces. Indeed, based on Propositions \ref{fund-lemma} and \ref{fund-lemma2} with the homogeneous symbol $\sigma(\xi)=4\pi^2\vert\xi\vert^2$ of degree $2$, the following lemma can be proved by proceeding as in \cite[Lemma 3.1-(i)]{MJ}.
	
	\begin{lemma}\label{galpha}
		Let $\gamma_1\leq \gamma_2\in\mathbb{R}$, $ 1<p_1\leq p_2<\infty,\ 0\leq \mu< N$,  $1< \alpha<2$ and $\lambda=(\gamma_2-\gamma_1)+\frac{N-\mu}{p_1}-\frac{N-\mu}{p_2}$. There is a constant $C$ such that
		\begin{align}
			\| G_{\alpha,1}(t) f\|_{\mathcal{M}_{p_2,\mu}^{\gamma_2}} &\leq Ct^{-\frac{\alpha}{2}\lambda}\|f\|_{\mathcal{M}_{p_1,\mu}^{\gamma_1}},\; \text{ if }\;\lambda<2,\label{item-i}\\
			\| G_{\alpha,2}(t) f\|_{\mathcal{M}_{p_2,\mu}^{\gamma_2}} &\leq Ct^{-\frac{\alpha}{2}\lambda} \,\|f\|_{\mathcal{M}_{p_1,\mu}^{\gamma_1-\frac{2}{\alpha}}},\; \text{ if } \;\lambda +\frac{2}{\alpha}<2,\label{item-ii}  \\
			\| G_{\alpha,\alpha}(t) f\|_{\mathcal{M}_{p_2,\mu}^{\gamma_2}} &\leq Ct^{\alpha-1-\frac{\alpha}{2}\lambda}\|f\|_{\mathcal{M}_{p_1,\mu}^{\gamma_1}},\; \text{ if }\;\left (2-\frac{2}{\alpha}\right)<\lambda<2,\label{item-iii}
		\end{align}
		for all $f\in\mathcal{S}'(\mathbb{R}^N)$.
	\end{lemma}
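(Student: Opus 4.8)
The plan is to reduce the three Mittag-Leffler operator estimates to the Mikhlin-type multiplier bounds already established in Propositions \ref{fund-lemma} and \ref{fund-lemma2}, combined with the Littlewood-Paley characterization \eqref{norm-SM} of the Sobolev-Morrey norm and the scaling relation \eqref{scal-SM}. First I would write the symbol of $G_{\alpha,j}(t)$ acting between the relevant Sobolev-Morrey spaces: to bound $\|G_{\alpha,1}(t)f\|_{\mathcal{M}_{p_2,\mu}^{\gamma_2}}$ by $\|f\|_{\mathcal{M}_{p_1,\mu}^{\gamma_1}}$, one composes with the Riesz potentials $(-\Delta_x)^{\gamma_2/2}$ on the left and $(-\Delta_x)^{-\gamma_1/2}$ on the right, so the effective Fourier multiplier is $m_t(\xi)=|\xi|^{\gamma_2-\gamma_1}E_{\alpha,1}(-4\pi^2 t^\alpha|\xi|^2)$, up to the Sobolev-type embedding \eqref{sobolev-emb} which absorbs the gap $\frac{N-\mu}{p_1}-\frac{N-\mu}{p_2}$ between the integrability indices.

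The key step is the rescaling. Setting $\sigma(\xi)=4\pi^2 t^\alpha|\xi|^2$ makes the degree-$2$ homogeneity available, but the $t$-dependence is handled by the substitution $\eta=t^{\alpha/2}\xi$: one checks that $|\xi|^{\delta}E_{\alpha,1}(-4\pi^2t^\alpha|\xi|^2)=t^{-\alpha\delta/2}|\eta|^{\delta}E_{\alpha,1}(-4\pi^2|\eta|^2)$, with $\delta=\gamma_2-\gamma_1$ playing the role of the exponent in \eqref{point1}. Proposition \ref{fund-lemma} with the symbol $\sigma(\eta)=4\pi^2|\eta|^2$ (of degree $k=2$, satisfying \eqref{key-sym} with a universal constant) then gives the Mikhlin bound $\big|\partial^\gamma_\eta[|\eta|^{\delta}E_{\alpha,1}(-4\pi^2|\eta|^2)]\big|\le C|\eta|^{-|\gamma|}$ for all $|\gamma|\le[N/2]+1$, precisely when $0\le\delta<2$, i.e. $0\le\gamma_2-\gamma_1<2$; after folding in the Sobolev embedding this becomes the stated condition $\lambda<2$. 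The Mikhlin-Hörmander multiplier theorem on Morrey spaces (available from \cite{Kato-Morrey,Mazzucato,Yamazaki2}, used exactly as in \cite[Lemma 3.1]{MJ}) converts this pointwise symbol estimate into the operator bound on $\mathcal{M}_{p_2,\mu}$, and unwinding the rescaling produces the factor $t^{-\frac{\alpha}{2}\lambda}$, proving \eqref{item-i}. For \eqref{item-ii} the operator $G_{\alpha,2}(t)$ carries the extra factor $t$ and the symbol $E_{\alpha,2}$; writing $G_{\alpha,2}(t)f$ as a multiplier from $\mathcal{M}^{\gamma_1-2/\alpha}_{p_1,\mu}$ means the relevant exponent in Proposition \ref{fund-lemma2} is $\delta=\gamma_2-\gamma_1+\frac2\alpha$ with $\beta=2$, so the hypothesis $k(\frac{\beta}{\alpha}-\frac1\alpha)\le\delta<k$ reads $\frac2\alpha\le\gamma_2-\gamma_1+\frac2\alpha<2$, i.e. exactly $\lambda+\frac2\alpha<2$; the prefactor $t$ combines with the $t^{-\alpha\delta/2}=t^{-\frac{\alpha}{2}(\lambda+2/\alpha)}=t^{-\frac{\alpha}{2}\lambda-1}$ from rescaling to give the clean $t^{-\frac{\alpha}{2}\lambda}$. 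Estimate \eqref{item-iii} is identical with $\beta=\alpha$: the condition becomes $k(\frac{\alpha}{\alpha}-\frac1\alpha)=2(1-\frac1\alpha)\le\delta<2$, i.e. $2-\frac2\alpha\le\lambda<2$, and the time prefactor of $G_{\alpha,\alpha}(t)$ (which is $t^{\alpha-1}$, seen from $\widehat{G_{\alpha,\alpha}(t)f}=t^{\alpha-1}E_{\alpha,\alpha}(-4\pi^2t^\alpha|\xi|^2)\widehat f$) yields the exponent $\alpha-1-\frac{\alpha}{2}\lambda$.

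The main obstacle is making the rescaling argument uniform and rigorous at the level of the Morrey multiplier theorem, rather than just the symbol estimate: one must verify that the Mikhlin-Hörmander operator norm on $\mathcal{M}_{p,\mu}(\mathbb{R}^N)$ depends only on the constant in the derivative bounds \eqref{key-sym} (here a universal constant, since $\sigma(\eta)=4\pi^2|\eta|^2$ is $t$-independent after rescaling) and not on $t$ — so that the entire $t$-dependence is transparently carried by the scaling factors $t^{-\alpha\delta/2}$ and the explicit time prefactor of $G_{\alpha,j}(t)$. Once this invariance is noted, the rest follows by the dictionary between the two equivalent norms in \eqref{norm-SM}, the Sobolev embedding \eqref{sobolev-emb}, and the scaling \eqref{scal-SM}; since the scheme is literally that of \cite[Lemma 3.1-(i)]{MJ}, I would present the $\beta=1$ case in a couple of lines and then indicate that \eqref{item-ii} and \eqref{item-iii} follow verbatim by replacing Proposition \ref{fund-lemma} with Proposition \ref{fund-lemma2} at $\beta=2$ and $\beta=\alpha$ respectively, tracking the shift of the regularity index by $-\frac2\alpha$ and the time prefactors $t$ and $t^{\alpha-1}$.
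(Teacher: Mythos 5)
Your proposal follows essentially the same route as the paper, which itself only sketches this lemma by invoking Propositions \ref{fund-lemma} and \ref{fund-lemma2} with the degree-two symbol $\sigma(\xi)=4\pi^2|\xi|^2$ and the scheme of \cite[Lemma 3.1-(i)]{MJ}; your rescaling $\eta=t^{\alpha/2}\xi$, the identification of the admissible ranges of $\delta$ with the stated conditions on $\lambda$, and the bookkeeping of the prefactors $t$ and $t^{\alpha-1}$ are all consistent with that argument. The only cosmetic point is that the exponent fed into the Mikhlin estimate should be the full gain $\delta=\lambda$ (respectively $\lambda+\tfrac{2}{\alpha}$), with the Sobolev-type embedding \eqref{sobolev-emb} then converting the target space from $\mathcal{M}_{p_1,\mu}^{\gamma_1+\delta}$ to $\mathcal{M}_{p_2,\mu}^{\gamma_2}$, which is what your final conditions and time exponents already reflect.
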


	We finish this subsection by noticing that $\left\{\partial_tG_{\alpha,1}(t)\right\}_{t\geq 0}$ and $\left\{\partial_tG_{\alpha,2}(t)\right\}_{t\geq 0}$ are bounded in Morrey spaces. Indeed, a straightforward computation gives us 
	\begin{equation}
		\frac{d}{dt} E_{\alpha,1}(-4\pi^2\vert\xi\vert^2t^{\alpha}) = -4\pi^2\vert\xi\vert^2\left[t^{\alpha-1}E_{\alpha,\alpha}(-4\pi^2\vert\xi\vert^2t^{\alpha})\right],\, t>0 \text{ and } \xi\neq 0.\nonumber
	\end{equation}
	It follows from Lemma \ref{galpha}-(iii) that 
	\begin{equation}\label{cgal1}
		\Vert  \partial_tG_{\alpha,1}(t)f\Vert_{\mathcal{M}_{p_2,\mu}}\leq C\Vert  G_{\alpha,\alpha}(t)f\Vert_{\mathcal{M}_{p_2,\mu}^{2}}\leq C t^{-\frac{\alpha}{2}\left(\frac{N-\mu}{p_1}-\frac{N-\mu}{p_2}\right)-1}\Vert  f\Vert_{\mathcal{M}_{p_1,\mu}}\nonumber .
	\end{equation}
	Using 
	\begin{equation}
	tE_{\alpha,2}(-4\pi^2\vert\xi\vert^2t^\alpha)=\int_{0}^{t}E_{\alpha,1}(-4\pi^2\vert\xi\vert^2s^{\alpha}) ds ,
	\end{equation}
	Lemma \ref{galpha}-(i) yields
	\begin{equation}\label{cgal2}
		\Vert  \partial_tG_{\alpha,2}(t)f\Vert_{\mathcal{M}_{p_2,\mu}}=\Vert  G_{\alpha,1}(t)f\Vert_{\mathcal{M}_{p_2,\mu}}\leq C t^{-\frac{\alpha}{2}\left(\frac{N-\mu}{p_1}-\frac{N-\mu}{p_2}\right)}\Vert  f\Vert_{\mathcal{M}_{p_1,\mu}}.\nonumber
	\end{equation}
	
	\subsection{Nonlinear estimates}\label{non_estmate}
	This subsection is devoted to estimate the nonlinear term $\mathcal{N}_{\alpha}(u)$ on the functional space $X_{\beta}$. Firstly, let us denote $\mathbf{B}(\nu,\eta)$ by \textit{special beta function} $\mathbf{B}(\nu,\eta)=\int_0^1(1-t)^{\nu-1}t^{\eta-1}dt$ which is finite, for all $\eta,\nu>0$. Let $k_1,k_2,k_3<1$, for  $t>0$ and $s>0$ the changes of variable $\tau \mapsto \tau s$ and $s\mapsto st$ give us 
	\begin{align}
		I(t)&=\int_0^t(t-s)^{-k_1}\int_0^s(s-\tau)^{-k_2}\tau^{-k_3}d\tau ds=\mathbf{B}(1-k_2,1-k_3)\int_0^t(t-s)^{-k_1}s^{-k_2-k_3+1}ds\nonumber\\
		&=\mathbf{B}(1-k_2,1-k_3)\mathbf{B}(1-k_1, 2-k_2-k_3)t^{2-k_1-k_2-k_3}.\label{beta}
	\end{align}
	We freely make use of (\ref{beta}) in the next proof.
	
	\begin{lemma}
		Under assumptions of Theorem \ref{gw}, there is a positive constant $K=K(\kappa_1,\kappa_2)$ such that
		\begin{align} \label{est-Non}
			\left\|\mathcal{N}_{\alpha}(u)-\mathcal{N}_{\alpha}(v)\right\|_{X_{\beta}}\leq K\Vert u-v\Vert_{X_{\beta}}\left[\Vert u\Vert_{X_{\beta}}^{\rho-1}+\Vert v\Vert_{X_{\beta}}^{\rho-1}+\Vert u\Vert_{X_{\beta}}^{q-1}+\Vert v\Vert_{X_{\beta}}^{q-1}\right] .
		\end{align}
	\end{lemma}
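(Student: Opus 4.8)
The plan is to estimate $\mathcal{N}_{\alpha}(u)-\mathcal{N}_{\alpha}(v)$ in each of the two norms defining $X_\beta$, namely in $t^{\beta}\|\cdot\|_{\mathcal{M}_{r,\mu}}$ and in $t^{\alpha/2+\beta}\|\cdot\|_{\mathcal{M}^1_{r,\mu}}$, by splitting the nonlinearity into the superlinear part $\kappa_2(|u|^{\rho-1}u-|v|^{\rho-1}v)$ and the gradient part $\kappa_1(|\nabla u|^q-|\nabla v|^q)$ and treating them separately with the same mechanism. First I would record the elementary pointwise inequalities
\begin{align}
\bigl||a|^{\rho-1}a-|b|^{\rho-1}b\bigr|&\leq C(|a|^{\rho-1}+|b|^{\rho-1})|a-b|,\nonumber\\
\bigl||a|^{q}-|b|^{q}\bigr|&\leq C(|a|^{q-1}+|b|^{q-1})|a-b|,\nonumber
\end{align}
valid for $\rho>1$, $q>1$, and then apply H\"older's inequality in Morrey spaces (Lemma \ref{lem:2.1}-(iii)) to push these products into a single space $\mathcal{M}_{p_*,\mu_*}$ of the right homogeneity. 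Concretely, for the superlinear term one writes $|u|^{\rho-1}\in\mathcal{M}_{r/(\rho-1),\,\mu(\rho-1)/r\cdot(\ldots)}$-type factors so that the product lands in a Morrey space with scaling index matched to reconstruct a $t^{\gamma}$ law after applying the linear bound.

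The core estimate is then obtained by composing three ingredients: the Duhamel kernel factorization $\mathcal{N}_\alpha(u)=\int_0^t G_{\alpha,1}(t-s)\int_0^s r_\alpha(s-\tau)f(u)(\tau)\,d\tau\,ds$ with $r_\alpha(\sigma)=\sigma^{\alpha-1}/\Gamma(\alpha)$ (so the inner kernel is a pure power); the linear smoothing estimate \eqref{item-i} of Lemma \ref{galpha} to move $G_{\alpha,1}(t-s)$ from a Morrey space $\mathcal{M}_{p_*,\mu}$ with low integrability to $\mathcal{M}_{r,\mu}$ (for the first norm) or $\mathcal{M}^1_{r,\mu}$ (for the second norm, raising the regularity index by $1$), paying a factor $(t-s)^{-\alpha\lambda/2}$ with the appropriate $\lambda$; and finally the definition of the $X_\beta$-norm, which gives $\|u(\tau)\|_{\mathcal{M}_{r,\mu}}\leq \|u\|_{X_\beta}\tau^{-\beta}$ and $\|\nabla u(\tau)\|_{\mathcal{M}_{r,\mu}}\leq\|u\|_{X_\beta}\tau^{-\beta-\alpha/2}$, turning the $\tau$-integrand into a product of powers of $\tau$ and $(s-\tau)$ and $(t-s)$. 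At this point the triple integral is exactly of the form $I(t)$ computed in \eqref{beta}, so it collapses to a constant times $t^{2-k_1-k_2-k_3}$; the constraints \eqref{param-hip2} together with \eqref{param1} are precisely what guarantees $k_1,k_2,k_3<1$ (so the beta functions converge) and that the resulting power of $t$ is exactly $-\beta$ (resp. $-\beta-\alpha/2$), so that multiplying by $t^\beta$ (resp. $t^{\alpha/2+\beta}$) and taking the supremum over $t>0$ yields a finite bound. Summing the contributions of the two nonlinear terms and using $|u|^{\rho-1}\leq\ldots$, $|v|^{\rho-1}\leq\ldots$ etc. produces the bracketed factor $\|u\|_{X_\beta}^{\rho-1}+\|v\|_{X_\beta}^{\rho-1}+\|u\|_{X_\beta}^{q-1}+\|v\|_{X_\beta}^{q-1}$ multiplying $\|u-v\|_{X_\beta}$.

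I expect the main obstacle to be the gradient term in the \emph{second} norm: estimating $\|\mathcal{N}_\alpha(u)-\mathcal{N}_\alpha(v)\|_{\mathcal{M}^1_{r,\mu}}$ requires one derivative of the Duhamel expression, and since the nonlinearity itself contains $\nabla u$, one must be careful not to differentiate $|\nabla u|^q$ (which is not controlled in $X_\beta$). The way around this is to let $G_{\alpha,1}(t-s)$ absorb the extra derivative — that is, use \eqref{item-i} with $\gamma_2=1$, $\gamma_1=0$, so that $\lambda$ increases by $1$ and the time weight becomes $(t-s)^{-\alpha\lambda/2-\alpha/2}$ — which is why the constraint $\frac{N-\mu}{p}-\frac{N-\mu}{r}<2$ (ensuring $\lambda<2$ even after adding $1$) and the inequality $\frac{\alpha}{2-\alpha}<q<\frac{2}{\alpha}$ appear. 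Bookkeeping the exact values of $k_1,k_2,k_3$ in the four cases (two norms $\times$ two nonlinear terms) and checking that each of the three inequalities in \eqref{param-hip2} is used to keep one of these exponents below $1$ while the equality constraints \eqref{param1} pin the final power of $t$ is the delicate, purely computational heart of the proof; everything else is a routine assembly of Lemma \ref{lem:2.1}, Lemma \ref{galpha}, and \eqref{beta}.
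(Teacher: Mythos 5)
Your proposal matches the paper's proof essentially step for step: the same splitting into $\mathcal{N}^1_\alpha$ (superlinear part) and $\mathcal{N}^2_\alpha$ (gradient part), the same pointwise inequalities combined with Morrey--H\"older to land in $\mathcal{M}_{r/\rho,\mu}$ and $\mathcal{M}_{r/q,\mu}$, the same use of \eqref{item-i} with $(\gamma_1,\gamma_2)=(0,1)$ so that $G_{\alpha,1}(t-s)$ absorbs the derivative instead of differentiating $|\nabla u|^q$, and the same beta-function identity \eqref{beta} collapsing the triple integral, with \eqref{param-hip2} ensuring convergence and \eqref{param1} pinning the final powers $t^{-\beta}$ and $t^{-\beta-\alpha/2}$. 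The one slip is your guess $r_\alpha(\sigma)=\sigma^{\alpha-1}/\Gamma(\alpha)$: the kernel the paper actually uses (and the one for which the exponent bookkeeping closes, as a Laplace-transform check of the Duhamel factorization confirms) is $\sigma^{\alpha-2}/\Gamma(\alpha-1)$, visible as the factor $(s-\tau)^{\alpha-2}$ in the paper's $\theta(s)$.
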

	
	\begin{proof} Recall $\mathcal{N}_{\alpha}(u)$ and rewrite it as follows
		\begin{align}
			\mathcal{N}_{\alpha}(u)(t)&=\int_{0}^{t}G_{\alpha,1}(t-s)\int_{0}^{s}r_{\alpha}(s-\tau) \left(\kappa_2\vert u\vert^{\rho-1} u+\kappa_1\vert \nabla_x u\vert^{q}\right) d\tau ds \\
			& =: \mathcal{N}_{\alpha}^1(u)(t)+\mathcal{N}_{\alpha}^2(u)(t).\nonumber
		\end{align}
		The proof is divided in three steps.
		
		\noindent \textbf{First step:} Estimates for $\mathcal{N}_{\alpha}^1(u)$. In the inequality (\ref{item-i}), let $(\gamma_1,\gamma_2,p_1,p_2)=(0,1,r/\rho,r)$ and  $1<\rho<r$ to estimate
		\begin{align}
			\Vert \mathcal{N}_{\alpha}^1(u)(t)-\mathcal{N}_{\alpha}^1(v)(t) \Vert_{\mathcal{M}^1_{r,\mu}}&\leq C \int_0^t (t-s)^{-\lambda_1}\int_0^sr_{\alpha}(s-\tau)\left\Vert\,f(u)-f(v)\,\right\Vert_{\mathcal{M}_{r/\rho,\mu}}d\tau ds\nonumber
		\end{align}
		where $f(u)(\tau)=\kappa_1\vert u(\tau)\vert^{\rho-1}u(\tau)$ and $\lambda_1=\frac{\alpha}{2}+\frac{\alpha}{2}\left(\frac{N-\mu}{r/\rho}-\frac{N-\mu}{r}\right)$. Using the facts  
		\begin{equation}
			\vert\, \vert a\vert^{\rho-1} a-\vert b\vert^{\rho-1}b\vert\leq C\vert a-b\vert\left(\vert a\vert^{\rho-1}+ \vert b\vert^{\rho-1}\right), \text{ for all }  \rho>1 \label{iq-fund}
		\end{equation}
		and $\frac{\rho}{r}=\frac{1}{r}+\frac{\rho-1}{r}$,  the H\"older inequality (\ref{eq:holder}) yields
		\begin{align}
			\Vert \mathcal{N}_{\alpha}^1(u)(t)-\mathcal{N}_{\alpha}^1(v)(t) \Vert_{\mathcal{M}^1_{r,\mu}}&\leq C\vert\kappa_2\vert \int_0^t (t-s)^{-\lambda_1}\theta(s)ds\label{B_1} ,
		\end{align}
		where $\theta(s)$ is given by
		\begin{align}
			\theta(s)&=\int_0^s(s-\tau)^{\alpha-2}\Vert u(\tau)-v(\tau)\Vert_{\mathcal{M}_{r,\mu}}\left(\Vert u(\tau)\Vert_{\mathcal{M}_{r,\mu}}^{\rho-1}+\Vert v(\tau)\Vert_{\mathcal{M}_{r,\mu}}^{\rho-1}\right)d\tau\nonumber\\
			&\leq C\int_0^s(s-\tau)^{\alpha-2}\tau^{-\rho\beta}\,\tau^{\beta}\Vert u(\tau)-v(\tau)\Vert_{\mathcal{M}_{r,\mu}} \times \nonumber\\
			&\times\tau^{\beta(\rho-1)}\left(\Vert u(\tau)\Vert_{\mathcal{M}_{r,\mu}}^{\rho-1}+\Vert v(\tau)\Vert_{\mathcal{M}_{r,\mu}}^{\rho-1}\right)d\tau\nonumber\\
			&\leq C\int_0^s(s-\tau)^{\alpha-2}\tau ^{-\rho\beta}d\tau\, \Vert u-v\Vert_{X_{\beta}}\left(\Vert u\Vert_{X_{\beta}}^{\rho-1}+\Vert v\Vert_{X_{\beta} }^{\rho-1}\right).\label{theta1}
		\end{align}
		Notice that $\alpha (\rho-1)\frac{N-\mu}{2r}=\alpha -(\rho-1)\beta$ yield to
		\begin{align}
			-\lambda_1+\alpha-\rho\beta&=-\frac{\alpha}{2}+(\rho -1)\beta -\alpha +\alpha- \rho\beta=-\frac{\alpha}{2}-\beta.\nonumber
		\end{align}
		It follows that (\ref{B_1}) can be bounded by
		\begin{align}
			\Vert \mathcal{N}_{\alpha}^1(u)(t)-\mathcal{N}_{\alpha}^1(v)(t)\Vert_{\mathcal{M}^1_{r,\mu}}&\leq C\vert\kappa_2\vert I_1(t) \,\Vert u-v\Vert_{X_{\beta}}\left(\Vert u\Vert_{X_{\beta}}^{\rho-1}+\Vert v\Vert_{X_{\beta} }^{\rho-1}\right)\nonumber\\
			&\leq C\vert\kappa_2\vert t^{-\beta-\frac{\alpha}{2}} \,\Vert u-v\Vert_{X_{\beta}}\left(\Vert u\Vert_{X_{\beta}}^{\rho-1}+\Vert v\Vert_{X_{\beta} }^{\rho-1}\right),
		\end{align}
		where the integral $I_1(t)$ (see (\ref{beta})) satisfies
		\begin{align}
			I_1(t)&=\int_0^t (t-s)^{-\lambda_1}\left(\int_0^s(s-\tau)^{\alpha-2}\tau ^{-\rho\beta}d\tau\right) ds=C t^{-\lambda_1+\alpha -\rho\beta}=Ct^{-\beta -\frac{\alpha}{2}} .\nonumber
		\end{align}
		
		Proceeding in a similar fashion, we obtain
		\begin{align}
			\Vert\mathcal{N}_{\alpha}^1(u)(t)-\mathcal{N}_{\alpha}^1(v)(t)\Vert_{\mathcal{M}_{r,\mu}}&\leq C\vert\kappa_2\vert  J_{1}(t) \,\Vert u-v\Vert_{X_{\beta}}\left(\Vert u\Vert_{X_{\beta}}^{\rho-1}+\Vert v\Vert_{X_{\beta} }^{\rho-1}\right)\nonumber\\
			&\leq C\vert\kappa_2\vert  t^{-\beta} \,\Vert u-v\Vert_{X_{\beta}}\left(\Vert u\Vert_{X_{\beta}}^{\rho-1}+\Vert v\Vert_{X_{\beta}}^{\rho-1}\right),
		\end{align}
		where $J_1(t)$ is given by 
		\begin{align}
			J_1(t)&=\int_0^t (t-s)^{-\vartheta_{1}}\int_0^s(s-\tau)^{\alpha-2}\tau ^{-\rho\beta}d\tau ds,
		\end{align}
		and  $\vartheta_1=\frac{\alpha}{2}\left(\frac{N-\mu}{r/\rho}-\frac{N-\mu}{r}\right)$. In view of
		\begin{align}
			-\vartheta_1+\alpha-\rho\beta&=\frac{\alpha}{2}\left(\frac{N-\mu}{r/\rho}-\frac{N-\mu}{r}\right)+\alpha-\rho\beta=(\rho -1)\beta -\alpha +\alpha- \rho\beta=-\beta\nonumber ,
		\end{align}
		one has
		\begin{align}
			J_1(t)&=Ct^{-\vartheta_1+\alpha-\rho\beta}=Ct^{-\beta}.
		\end{align}
		The convergence of $I_{1}(t)$ and $J_{1}(t)$ follows from (\ref{param-hip2}). In fact, $\lambda_1= \frac{\alpha}{2}+\vartheta_1<1$ because $\frac{p}{r}<\left(\frac{1}{\alpha}-\frac{1}{2}\right)$ is equivalent to $\vartheta_1=\alpha\frac{p}{r}<1-\frac{\alpha}{2}$. And $\left(1-\frac{p}{r}\right) < \frac{\rho-1}{\alpha}\left(\frac{1}{q}-\frac{\alpha}{2}\right)=\frac{\rho-1}{\alpha\rho}\left(\frac{\rho+1}{2}-\frac{\alpha\rho}{2}\right)<\frac{\rho-1}{\alpha\rho}$ leads to $\rho\beta<1$.

		\noindent \textbf{Second step:} Estimates for $\mathcal{N}_{\alpha}^2(u)$. Using inequality (\ref{item-i}) with $(\gamma_1,\gamma_2,p_1,p_2)=(0,1,r/q,r)$, in view of $1<q<\rho<r$, we obtain
		\begin{align}
			\Vert\mathcal{N}_{\alpha}^2(u)(t)-\mathcal{N}_{\alpha}^2(v)(t)\Vert_{\mathcal{M}_{r,\mu}^1}&\leq C \int_0^t (t-s)^{-\lambda_2}\int_0^sr_{\alpha}(s-\tau)\left\Vert\,g(u)-g(v)\,\right\Vert_{\mathcal{M}_{r/q,\mu}}d\tau ds\nonumber ,
		\end{align}
		where $g(f)(\tau)=\kappa_2\vert \nabla_xf(\tau)\vert^q$ and $\lambda_2=\frac{\alpha}{2}+\frac{\alpha}{2}\left(q\frac{N-\mu}{r}-\frac{N-\mu}{r}\right)$. It follows from inequality
		\begin{align}
			\Vert\,\vert\nabla_x u(t)\vert^q-\vert\nabla_x v(t)\vert^q\,\Vert_{\mathcal{M}_{r,\mu}}\leq C  \Vert u(t)- v(t)\Vert_{\mathcal{M}_{r,\mu}^1}\left(\Vert  u(t)\Vert_{\mathcal{M}_{r,\mu}^1}^{q-1}+\Vert v(t)\Vert_{\mathcal{M}_{r,\mu}^1}^{q-1}\right)\nonumber
		\end{align}
		that
		\begin{align}
			\Vert\mathcal{N}_{\alpha}^2(u)(t)-\mathcal{N}_{\alpha}^2(v)(t)\Vert_{\mathcal{M}_{r,\mu}^1}&\leq C\vert\kappa_1\vert \int_0^t (t-s)^{-\lambda_2}\tilde{\theta}(s)ds\label{B_2},
		\end{align}
		where $\tilde{\theta}(s)$ is bounded by
		\begin{align}
			\tilde{\theta}(s)&= C\int_0^s(s-\tau)^{\alpha-2}\Vert u(\tau)-v(\tau)\Vert_{\mathcal{M}_{r,\mu}^1}\left(\Vert u(\tau)\Vert_{\mathcal{M}_{r,\mu}^1}^{q-1}+\Vert v(\tau)\Vert_{\mathcal{M}_{r,\mu}^1 }^{q-1}\right)d\tau\nonumber\\
			&\leq C\int_0^s(s-\tau)^{\alpha-2}\tau ^{-q(\beta +\frac{\alpha}{2})}d\tau\, \Vert u-v\Vert_{X_{\beta}}\left(\Vert u\Vert_{X_{\beta}}^{q-1}+\Vert v\Vert_{X_{\beta} }^{q-1}\right).\label{theta2}
		\end{align}
		Hence, we estimate 
		\begin{align}
			\left\Vert\mathcal{N}_{\alpha}^2(u)(t)-\mathcal{N}_{\alpha}^2(v)(t)\right\Vert_{\mathcal{M}_{r,\mu}^1}&\leq C\vert \kappa_1\vert I_2(t) \,\Vert u-v\Vert_{X_{\beta}}\left(\Vert u\Vert_{X_{\beta}}^{q-1}+\Vert v\Vert_{X_{\beta}}^{q-1}\right)\nonumber\\
			&\leq C\vert\kappa_1\vert t^{-\beta-\frac{\alpha}{2}}\,\Vert u-v\Vert_{X_{\beta}}\left(\Vert u\Vert_{X_{\beta}}^{q-1}+\Vert v\Vert_{X_{\beta}}^{q-1}\right),
		\end{align}
		where $I_2(t)$ is given by
		\begin{align}
			I_2(t)&=\int_0^t (t-s)^{-\lambda_2}\int_0^s(s-\tau)^{\alpha-2}\tau ^{-q(\beta +\frac{\alpha}{2})}d\tau ds=Ct^{-\lambda_2+\alpha-q(\beta+\alpha/2)}=Ct^{-\beta -\frac{\alpha}{2}}.\nonumber
		\end{align}
		Indeed, in view of $q=\frac{2\rho}{\rho+1}$ and  
		\begin{equation}
			(q-1)\beta=\alpha \frac{q-1}{\rho-1}-\alpha (q-1)\frac{N-\mu}{2r}=\frac{\alpha}{2}(2-q)-\alpha (q-1)\frac{N-\mu}{2r},\label{eq-auxN}
		\end{equation}
		we obtain
		\begin{align}
			-\lambda_2+\alpha-q(\beta+\alpha/2)&=-\alpha (q-1)\frac{N-\mu}{2r}-\frac{\alpha}{2} + \alpha-q(\beta+\alpha/2)\nonumber\\
			&=(q-1)\beta-\frac{\alpha}{2}(2-q)-\frac{\alpha}{2} + \alpha-q(\beta+\alpha/2)\nonumber\\
			&=-\frac{\alpha}{2}-\beta.\nonumber
		\end{align}
		It remains to get estimates for  $\sup_{0<t<T}t^{\beta}\Vert\mathcal{N}_{\alpha}^2(u)(t)-\mathcal{N}_{\alpha}^2(v)(t)\Vert_{\mathcal{M}_{r,\mu}}$. Proceeding as before, one has 
		\begin{align}
			\Vert\mathcal{N}_{\alpha}^2(u)(t)-\mathcal{N}_{\alpha}^2(v)(t)\Vert_{\mathcal{M}_{r,\mu}}
			&\leq C\vert\kappa_1\vert J_{2}(t)\,\sup_{0<t<T}t^{\beta +\frac{\alpha}{2}}\Vert \nabla_x u(t)-\nabla_xv(t)\Vert_{\mathcal{M}_{r,\mu}}\times\nonumber\\
			&\times \sup_{0<t<T}t^{(\beta +\frac{\alpha}{2})(q-1)}\left(\Vert \nabla_x u(t)\Vert_{\mathcal{M}_{r,\mu}}^{q-1}+\Vert \nabla_x v(t)\Vert_{\mathcal{M}_{r,\mu}}^{q-1}\right)\nonumber\\
			&\leq C\vert\kappa_1\vert J_{2}(t)\,\Vert u-v\Vert_{X_{\beta}}\left(\Vert u\Vert_{X_{\beta}}^{q-1}+\Vert v\Vert_{X_{\beta}}^{q-1}\right)\nonumber\\
			&\leq C\vert \kappa_1\vert t^{-\beta}\,\Vert u-v\Vert_{X_{\beta}}\left(\Vert u\Vert_{X_{\beta}}^{q-1}+\Vert v\Vert_{X_{\beta}}^{q-1}\right),
		\end{align}
		where $J_2(t)$ satisfy
		\begin{align*}
			J_2(t)&=\int_0^t (t-s)^{-\vartheta_2}\int_0^s(s-\tau)^{\alpha-2}\tau ^{-q(\beta +\frac{\alpha}{2})}d\tau ds=Ct^{-\vartheta_2+\alpha -q(\beta +\alpha/2)}=Ct^{-\beta}.
		\end{align*}
		In fact, the inequality (\ref{item-i}) with $(\gamma_1,\gamma_2, p_1,p_2)=(0, 0,r/q, r)$ implies that  $\vartheta_2=\frac{\alpha}{2}\left(q\frac{N-\mu}{r}-\frac{N-\mu}{r}\right)=\alpha (q-1)\frac{N-\mu}{2r}$, which by (\ref{eq-auxN}) give us
		\begin{align}
			-\vartheta_2+\alpha -q(\beta+\frac{\alpha}{2}) = (q-1)\beta -\frac{\alpha}{2}(2-q)+\alpha -q(\beta+\frac{\alpha}{2})=-\beta.
		\end{align}
		The convergence of the beta functions $I_2(t), J_2(t)$ follows by our hypotheses in (\ref{param-hip2}), because  $\left(1-\frac{p}{r}\right) < \frac{\rho-1}{\alpha}\left(\frac{1}{q}-\frac{\alpha}{2}\right)$ is equivalent to $q(\beta+\alpha/2)<1$ and $\frac{p}{r}<\left(\frac{1}{\alpha}-\frac{1}{2}\right)$ and $q=\frac{2\rho}{\rho+1}$ yields to $\vartheta_2=\frac{\alpha}{\rho+1}\frac{p}{r}<\left( 1-\frac{\alpha}{2}\right)$ which is equivalent to $ \lambda_2=\frac{\alpha}{2}+\vartheta_2<1$.
		
		\noindent \textbf{Third step:} The two steps above lead to
		\begin{align*}
			&\Vert  \mathcal{N}_{\alpha}(u)-\mathcal{N}_{\alpha}(v)\Vert_{X_{\beta}} \\
			\leq & \Vert  \mathcal{N}_{\alpha}^1(u)-\mathcal{N}_{\alpha}^1(v)\Vert_{X_{\beta}}+\Vert  \mathcal{N}_{\alpha}^2(u)-\mathcal{N}_{\alpha}^2(v)\Vert_{X_{\beta}}\\
			\leq &K(\kappa_1,\kappa_2)\Vert u-v\Vert_{X_{\beta}}\left[\left(\Vert u\Vert_{X_{\beta}}^{\rho-1}+\Vert v\Vert_{X_{\beta}}^{\rho-1}\right)+\left(\Vert u\Vert_{X_{\beta}}^{q-1}+\Vert v\Vert_{X_{\beta}}^{q-1}\right)\right].
		\end{align*}
	\end{proof}

	\section{Proof of the Theorems}\label{proofs}

	Now, we put all estimates of Section \ref{sme} together to prove our theorems.
	\subsection{Proof of Theorem \ref{gw}}
	\noindent\textbf{Part (i):} Notice that
	\begin{equation}
	\|G_{\alpha,1}(t)\varphi\|_{X_{\beta}} + \|G_{\alpha,2}(t)\psi\|_{X_{\beta}}=\Vert \varphi\Vert_{D(\alpha,\beta)}+\Vert \psi\Vert_{\widetilde{D}(\alpha,\beta)}\leq \varepsilon,\label{est-proof4}
	\end{equation} 
	where $\varepsilon>0$ will be chosen such that
	\begin{align}
		\left(2^{\rho}\varepsilon^{\rho-1}+2^{q}\varepsilon^{q-1}\right)<\frac{1}{2K}.\label{rest}
	\end{align}
	Consider the complete metric $d_B(\cdot,\cdot)$ defined by $d_{B}(u,v)=\Vert u-v\Vert_{X_{\beta}}$ on the ball $B_{X_{\beta}}(2\varepsilon)$ and let $\Lambda:B_{X_{\beta}}(2\varepsilon)\rightarrow B_{X_{\beta}}(2\varepsilon)$ be the operator
	\begin{equation}
		\Lambda (u)(t)= G_{\alpha,1}(t)\varphi +G_{\alpha,2}(t)\psi +\mathcal{N}_{\alpha}(u)(t).\nonumber
	\end{equation}
	We would like to show that  $\Lambda(B_{X_{\beta}}(2\varepsilon))\subset B_{X_{\beta}}(2\varepsilon)$ and $\Lambda$ is a contraction on metric space $(B_{X_{\beta}}(2\varepsilon), d_B)$. In fact, 
	the continuity of $G_{\alpha,j}(\cdot):(0,\infty)\to \mathcal{M}_{r,\mu},\ j=1,2,$ proved at the final of subsection \ref{linear-est}, and the regularization property of the convolution imply that $(\Lambda u):(0,\infty)\to \mathcal{M}_{r,\mu}$ is continuous, whenever $u\in X_\beta$. Further, from Lemma \ref{est-Non} we obtain
	\begin{align}
		\Vert \Lambda(u)-\Lambda(v)\Vert_{X_{\beta}}&=\Vert\mathcal{N}_{\alpha}(u)-\mathcal{N}_{\alpha}(v)\Vert_{X_{\beta}}\nonumber\\
		&\leq K\Vert u-v\Vert_{X_{\beta}}\left[\Vert u\Vert_{X_{\beta}}^{\rho-1}+\Vert v\Vert_{X_{\beta}}^{\rho-1}+\Vert u\Vert_{X_{\beta}}^{q-1}+\Vert v\Vert_{X_{\beta}}^{q-1}\right]\nonumber\\
		&\leq K\left(2^{\rho}\varepsilon^{\rho-1}+2^{q}\varepsilon^{q-1}\right)\Vert u-v\Vert_{X_{\beta}} \label{est-proof3} 
	\end{align}
	for all $u,v\in B_{X_{\beta}}(2\varepsilon)$. Now, noting that $\Lambda(0)(t)= G_{\alpha,1}(t)\varphi+G_{\alpha,2}(t)\psi$, the estimates (\ref{est-proof4}), (\ref{est-proof3}) give us
	\begin{align}
		\Vert \Lambda(u)\Vert_{X_{\beta}}&\leq\Vert G_{\alpha,1}(t)\varphi+G_{\alpha,2}(t)\psi
		\Vert_{X_{\beta}} + \left\Vert\mathcal{N}_{\alpha}(u)-\mathcal{N}_{\alpha}(0)\right\Vert_{X_{\beta}}\nonumber\\
		&\leq \varepsilon + K\left(2^{\rho}\varepsilon^{\rho-1}+2^{q}\varepsilon^{q-1}\right)2\varepsilon < 2\varepsilon
	\end{align}
	in view of (\ref{rest}) and provided that $u\in B_{X_{\beta}}(2\varepsilon)$. Hence, $\Lambda(B_{X_{\beta}}(2\varepsilon))\subset B_{X_{\beta}}(2\varepsilon)$ and $\Lambda$ is a contraction in $B_{X_{\beta}}(2\varepsilon)$. It follows by Banach fixed theorem that there is a \textit{mild solution} $u\in X_{\beta}$ for (\ref{heat-wave})-(\ref{initial-data}) which is unique in the ball $B_{X_{\beta}}(2\varepsilon)$.
	
	\noindent\textbf{Part (ii):}
	Let $u$ and $\tilde{u}$ be two \textit{mild solutions} in $B_{X_{\beta}}(2\varepsilon)$,  obtained in the Part (i), subject to initial data $(\varphi,\psi)$ and $(\tilde{\varphi}, \tilde{\psi})$, respectively. Then,
	\begin{align}
		\Vert u-\tilde{u}\Vert_{X_{\beta}}&\leq \Vert G_{\alpha,1}(\varphi-\tilde{\varphi})\Vert_{X_{\beta}} +\Vert G_{\alpha,2}(\psi-\tilde{\psi})\Vert_{X_{\beta}} + \Vert\mathcal{N}_{\alpha}(u)-\mathcal{N}_{\alpha}(\tilde{u})\Vert_{X_{\beta}}\nonumber\\
		&\leq  \Vert \varphi-\tilde{\varphi}\Vert_{D(\alpha,\beta)}+\Vert \psi-\tilde{\psi}\Vert_{\widetilde{D}(\alpha,\beta)}+K\left( 2^\rho\varepsilon^{\rho-1}+2^q\varepsilon^{q-1}\right) \Vert u-\tilde{u}\Vert_{X_{\beta}}\nonumber
	\end{align}
	which yields the Lipschitz continuity 
	\begin{align}
		\Vert u-\tilde{u}\Vert_{X_{\beta}}\leq \frac{1}{\left(1-\frac{1}{4K}\right)}\left(\Vert \varphi-\tilde{\varphi}\Vert_{D(\alpha,\beta)}+\Vert \psi-\tilde{\psi}\Vert_{\widetilde{D}(\alpha,\beta)}\right).
	\end{align}
\qed	
	\subsection{Proof of Theorem \ref{selfsimilarity}}
	
	Let $\delta_{\lambda}f(x)=f(\lambda x)$ and notice that $\widehat{\delta_{\lambda}f}(\xi)=\lambda^{-N}\widehat{f}(\xi/\lambda)$. Recall that $G_{\alpha, 2}(t)\psi$ and $G_{\alpha,1}(t)\varphi$ are given by
	$$G_{\alpha,2}(t)\psi:=t\,k_{\alpha,2}(t,\cdot)\ast \psi \text{ and }G_{\alpha,1}(t)\varphi:=k_{\alpha,1}(t,\cdot)\ast \varphi,$$ 
	where $\widehat{k}_{\alpha,2}(t,\xi)=tE_{\alpha,2}(-4\pi^2t^{\alpha}\vert\xi\vert^2)$ 
	and $\widehat{k}_{\alpha,1}(t,\xi)=E_{\alpha,1}(-4\pi^2t^{\alpha}\vert\xi\vert^2)$. 
	Notice that
	\begin{align}
		[\delta_{\gamma}G_{\alpha,2}(\gamma^{\frac{2}{\alpha}}t)\psi]^{\wedge}(\xi)&=\gamma^{-N} \gamma^{\frac{2}{\alpha}}t\widehat{k}_{\alpha,2}(\gamma^{\frac{2}{\alpha}}t,\xi/\gamma)\widehat{\psi}(\xi/\gamma)\nonumber\\
		&=\gamma^{\frac{2}{\alpha}}[tE_{\alpha,2}(-4\pi^2t^{\alpha}\vert\xi\vert^2)]\gamma^{-N}\widehat{\psi}(\xi/\gamma)=\gamma^{-\frac{2}{\rho-1}}[G_{\alpha,2}(t)\psi]^{\wedge}(\xi),\nonumber
	\end{align}
	in view of $\delta_{\gamma}\psi(x)=\gamma^{-\frac{2}{\rho-1}-\frac{2}{\alpha}}\psi(x)$. Hence,
	\begin{align}
		[G_{\alpha,2}(t)\psi]_{\gamma}=G_{\alpha,2}(t)\psi_{\gamma}\text{ and } [G_{\alpha,1}(t)\varphi]_{\gamma}=G_{\alpha,1}(t)\varphi_{\gamma}.\nonumber
	\end{align}
	We can easily check that $\mathcal{N}_{\alpha}(u_{\gamma})=[\mathcal{N}_{\alpha}(u)]_{\gamma}$, for all $\gamma>0$. Here, we denoted  $[\mathcal{N}_{\alpha}(u)]_{\gamma}(t,x)=\gamma^{\frac{2}{\rho-1}}\mathcal{N}_{\alpha}(u)(\gamma^{\frac{2}{\alpha}}t,\gamma x) $. Therefore,
	\begin{equation}
		u_\gamma(t) = [G_{\alpha,1}(t)\varphi]_\gamma+ [G_{\alpha,2}(t)\psi]_\gamma + [\mathcal{N}_\alpha(u)]_\gamma = G_{\alpha,1}(t)\varphi_\gamma + G_{\alpha,2}(t)\psi_\gamma + \mathcal{N}_\alpha(u_\gamma)\nonumber
	\end{equation}
	is a mild solution $u_\gamma\in X_{\beta}$ of (\ref{heat-wave})-(\ref{initial-data}). From $\Vert u_\gamma\Vert_{X_\beta} = \Vert u \Vert_{X_\beta}$ and the uniqueness proved in Theorem \ref{gw}-(i), we have 
	\begin{align}
		u(t,x)=u_\gamma(t,x), \text{ a.e } x\in\mathbb{R}^N \text{and for all }\gamma,t>0.\nonumber
	\end{align}
	\qed

	\subsection{Proof of Theorem \ref{symmetry}}\label{prof-symmetry}

	Let $G_{\alpha,2}(t)\psi:=k_{\alpha,2}(t,\cdot)\ast \psi \text{ and }G_{\alpha,1}(t)\varphi:=k_{\alpha,1}(t,\cdot)\ast \varphi,$ 
	be as above. For $T\in\mathcal{A}$, we have
	\begin{align}
		k_{\alpha,1}(t, T(x))&=\int_{\mathbb{R}^N} e^{2\pi i \,\langle T(x),\,\xi\rangle} E_{\alpha,1}(-4\pi^2t^{\alpha}|\xi|^{2}))d\xi\nonumber\nonumber\\
		&=\int_{\mathbb{R}^N} e^{2\pi i \,\langle T(x),\,T(\xi)\rangle} E_{\alpha,1}(-4\pi^2t^{\alpha}|T(\xi)|^{2}))\vert det T\vert d\xi\nonumber\\
		&=\int_{\mathbb{R}^N} e^{2\pi i \,\langle x,\,\xi\rangle} E_{\alpha,1}(-4\pi^2t^{\alpha}|\xi|^{2}))d\xi=k_{\alpha,1}(t, x),\nonumber
	\end{align}
	where we used the change of variable $\xi\mapsto T(\xi)$ and the fact that $\vert \det T\vert = 1$. In a  similar fashion one has
	\[k_{\alpha,2}(t, T(x))=k_{\alpha,2}(t, x).\]
	It follows that
	\begin{align*}
		u_1(t, T(x))&:=\int_{\mathbb{R}^N} k_{\alpha,1}(t, T(x)-y)\varphi(y)dy +\int_{\mathbb{R}^N} k_{\alpha,2}(t, T(x)-y)\psi(y)dy\\
		&=\int_{\mathbb{R}^N} k_{\alpha,1}(t, T(x-z))\varphi(Tz)dz +\int_{\mathbb{R}^N} k_{\alpha,2}(t, T(x-z))\psi(Tz)dz\\
		&=-\int_{\mathbb{R}^N} k_{\alpha,1}(t, x-z)\varphi(z)dz -\int_{\mathbb{R}^N} k_{\alpha,2}(t, x-z)\psi(z)dz=u_1(t,x)
	\end{align*}
	when $\varphi$ and $\psi$ are symmetric under action $\mathcal{A}$. Let $\theta(s,x)=\int_{0}^{s}r_{\alpha}(s-t) \kappa_2\vert u(t,x)\vert^{\rho-1} u(t,x)-\kappa_1 \vert \nabla_xu(t,x)\vert^{q}$ and notice that 
	\begin{align*}
		\theta(s,Tx)&=\int_{0}^{s}r_{\alpha}(s-t) \left(\kappa_2\vert u(t, Tx)\vert^{\rho-1} u(t,Tx)-\kappa_1\vert \nabla_x u(t,Tx)\vert^{q}\right) d\tau\nonumber\\
		&=\int_{0}^{s}r_{\alpha}(s-t) \left(\kappa_2\vert u(t,x)\vert^{\rho-1}u(t,x) -\kappa_1 \vert T \,\nabla_x u(t,T(x))\vert^{q}\right)d\tau=\theta(s,x)
	\end{align*}
	is symmetric whenever $u(t,\cdot)$ is also. Hence, 
	\begin{align}
		\mathcal{N}_{\alpha}(u)(t,x)&=\int_{0}^{t}\int_{\mathbb{R}^N}k_{\alpha,1}(t-s,x-y)\theta(s,y)dyds
	\end{align}
	is symmetric if $u(t,\cdot)$ is, for each $t>0$. From now on, employing an induction argument  in the following Picard's sequence
	\begin{align}
		u_{1}(t,x)& =G_{\alpha,2}(t)\psi +G_{\alpha,1}(t)\varphi\nonumber \\
		u_{k}(t,x)& =u_{1}(t,x)+\mathcal{N}_{\alpha }(u_{k-1})(t,x),\;k=2,3,\cdots
		\nonumber
	\end{align}%
	one can prove that $(u_k)$ is symmetric. It follows that $u(t,x)$ is symmetric, for
	all $t>0$. 	
\qed
%
%
	
%

	\section*{References}

\end{document}